\newtheorem{theorem}{Theorem}[section]
\newtheorem{remark}[theorem]{Remark}
\numberwithin{equation}{section}
\newcommand{\me}{\mathrm{e}}
\newcommand{\mi}{\mathrm{i}}
\newcommand{\kp}{\kappa_\mathfrak{p}}
\newcommand{\ks}{\kappa_\mathfrak{s}}
\newcommand{\bg}{\boldsymbol{g}}
\newcommand{\bq}{\boldsymbol{q}}
\newcommand{\bw}{\boldsymbol{w}}
\begin{document}

\title[A spectral boundary integral method for elastic scattering]{A spectral boundary integral method for the elastic obstacle scattering problem in three dimensions}

\author{Heping Dong}
\address{School of Mathematics, Jilin University, Changchun,  Jilin 130012, P. R. China}
\email{dhp@jlu.edu.cn}

\author{Jun Lai}
\address{School of Mathematical Sciences, Zhejiang University
	Hangzhou, Zhejiang 310027, China}
\email{laijun6@zju.edu.cn}

\author{Peijun Li}
\address{Department of Mathematics, Purdue University, West Lafayette, Indiana
	47907, USA}
\email{lipeijun@math.purdue.edu}

\subjclass[2010]{65R20, 65N38}

\keywords{Navier equation, elastic scattering problem, boundary integral equation, spherical harmonics}

\begin{abstract}
In this paper, we consider the scattering of a plane wave by a rigid obstacle embedded in a homogeneous and isotropic elastic medium in three dimensions. Based on the Helmholtz decomposition, the elastic scattering problem is reduced to a coupled boundary value problem for the Helmholtz and Maxwell equations. A novel system of boundary integral equations is formulated and a spectral boundary integral method is developed for the coupled boundary value problem. Numerical experiments are presented to demonstrate the superior performance of the proposed method. 
\end{abstract}

\maketitle

\section{Introduction}

The scattering problems for elastic waves have attracted considerable attention due to the significant applications in diverse scientific areas such as nondestructive testing, medical imaging, and seismic exploration \cite{LL-86, ABG-15}. Although many mathematical and computational results are available, it still presents a challenging question on accurate computing of the scattering problems for elastic waves, especially in three dimensions, due to the complexity of the underlying equation. This paper is concerned with a numerical solution for the time-harmonic elastic scattering problem of a rigid obstacle embedded in a homogeneous and isotropic elastic medium in three dimensions. The goal is to develop a spectral boundary integral method for the elastic obstacle scattering problem. 

Compared with the finite element or finite difference methods, the method of boundary integral equations has two intrinsic advantages: it is only required to discretize the boundary of the domain and the radiation condition at infinity is satisfied automatically\cite{PV-JASA,MP-book1986}. However, it also brings an extra difficulty that boundary integrals are usually singular and their accurate numerical approximation is highly involved, especially for three dimensional geometries. Over the years, various methods of boundary integral equations have been proposed to solve the three-dimensional elastic scattering problems. A high order singular integral quadrature method with GMRES was developed in \cite{BronoYin2020} for the elastic scattering problems with the Dirichlet and Neumann boundary conditions on closed and open surfaces. In \cite{BLR2014}, the elastic wave scattering of a time-harmonic incident wave that impinges on a penetrable obstacle was considered, and the singular integral was discretized by the use of partition of unity. Based on the fact that for analytic functions on a smooth closed surface that is isomorphic to a sphere and the interpolation based on spherical harmonics gives spectral accuracy, a high order method for singular integrals in the boundary integral equation was developed in \cite{GG2004} and \cite{GH2008} for the acoustic wave equation and Maxwell's equations, respectively. In \cite{Louer2014}, a high order spectral method was proposed for solving  elastic obstacle scattering problem with the Dirichlet or Neumann boundary condition by directly utilizing the Green function of the three-dimensional elastic wave equation. 

It is worth mentioning that the Green function of the elastic wave equation is a second order tensor and the singularity is tedious to be separated in the computation of boundary integral equations, especially for the Neumann boundary condition and 
the three-dimensional problem \cite{BXY2017, BLR2014, BronoYin2020, Louer2014, TC2007, LR1993}. To bypass this complexity, we
employ the Helmholtz decomposition by introducing one scalar potential function and one vector potential function to split the displacement of the elastic wave field into the compressional and shear wave components. The two wave components, one of which satisfies the three-dimensional Helmholtz equation and one of which satisfies the Maxwell equation, are coupled at the boundary of the obstacle. Therefore, the boundary value problem of the elastic wave equation is converted equivalently into a coupled boundary value problem of the Helmholtz and Maxwell equations for the potentials. Such a decomposition greatly reduces the complexity for the computation of the elastic scattering problem. Similar techniques have also been successfully applied to many other problems such as the unsteady and incompressible flow, the two-dimensional elastic scattering, and inverse scattering problems \cite{GJ2019,DLL2019,DLL2020,DLL2021,LY2019,YLLY19}. 

In this work, by making use of the Helmholtz decomposition, the elastic obstacle scattering problem is reduced to a coupled boundary value problem, which is shown to have a unique solution. Based on the potential theory for the Helmholtz and Maxwell equations, a system of boundary integral equations is formulated for the coupled boundary value problem, and the uniqueness of the solution is discussed for the boundary integral formulation. For the numerical discretization, we adopt the Galerkin method and use the surface differential operators and Stokes' formula to reduce the strong singular operators to weakly singular ones. The approach leads to a high order full-discrete scheme which is similar to the one developed for the acoustic obstacle scattering problem in three dimensions \cite{GG2004}. It should be emphasized that all operations in the full discretization scheme are scalar, which greatly simplify the numerical implementation. Numerical experiments are provided for various geometries and different wavenumbers to demonstrate the superior performance of the proposed method. 

To summarize, the paper contains three contributions: 

\begin{enumerate}
	\item propose a novel boundary integral formulation for the coupled boundary value problem via the Helmholtz decomposition;
	
	\item regularize the singularity of the boundary integral by making use of the Stokes' formula and surface differential operators;
		
	\item develop a spectral method for the approximation of the coupled boundary integral equations by using spherical harmonics.
\end{enumerate}

The paper is organized as follows. In Section 2, we introduce the elastic scattering problem and reduce it to a coupled boundary value problem by using the Helmholtz decomposition. In Section 3, the system of coupled boundary integral equations is presented and the uniqueness is examined for the solution. Section 4 gives the spherical parameterization of the surface integral and discusses the regularization of the strong singular operators. The full-discrete spectral scheme is proposed in Section 5 for the system of the coupled boundary integral equations. Numerical experiments are shown in Section 6 to demonstrate the effectiveness of the proposed method. The paper concludes with some general remarks in Section 7. 


\section{Problem formulation}

Consider a three-dimensional elastically rigid obstacle, which is given as a bounded domain $D\subset\mathbb R^3$ with analytic boundary $\Gamma_D$. Denote by $\nu$ the unit normal vector and $\tau_1, \tau_2$ the orthonormal tangential vectors on $\Gamma_D$, respectively. The exterior domain $\mathbb{R}^3\setminus \overline{D}$ is assumed to be filled with a homogeneous and isotropic elastic medium with a unit mass density. 

Let the obstacle be illuminated by a time-harmonic wave given explicitly by either the compressional plane wave
$\boldsymbol{u}^i(x)=\boldsymbol{d} \mathrm{e}^{\mathrm{i} \kappa_{\mathfrak p}\boldsymbol{d}\cdot x}$ or the shear plane wave
$\boldsymbol{u}^i(x)=\boldsymbol{d}\times\boldsymbol{p} \mathrm{e}^{\mathrm{i}\kappa_{\mathfrak s} \boldsymbol{d}\cdot x}$, 
where $\boldsymbol{d}=(\sin\theta\cos\varphi, \sin\theta\sin\varphi, \cos\theta)^\top$ is the unit propagation direction vector with $\theta\in[0, \pi], \varphi\in[0, 2\pi)$ being the incident angles, $\boldsymbol{p}$ is the unit polarization vector satisfying $\boldsymbol{p}\cdot\boldsymbol{d}=0$,  
and
\[
\kappa_{\mathfrak p}=\frac{\omega}{\sqrt{\lambda+2\mu}},\quad \kappa_{\mathfrak s}=\frac{\omega}{\sqrt{\mu}}
\]
are the compressional and shear wavenumbers, respectively. Here $\omega>0$ is the angular frequency and $\lambda, \mu$ are the
Lam\'{e} constants satisfying $\mu>0, \lambda+\mu>0$. It can be verified that the incident wave $\boldsymbol{u}^i$ satisfies the Navier equation
\begin{equation*}
 \mu\Delta\boldsymbol{u}^i+(\lambda+\mu)\nabla\nabla\cdot\boldsymbol{u}^i
+\omega^2\boldsymbol{u}^i=0\quad {\rm in} ~ \mathbb{R}^3. 
\end{equation*}

The displacement of the total field $\boldsymbol{u}$ satisfies the Navier equation
\begin{equation*}
\mu\Delta\boldsymbol{u}+(\lambda+\mu)\nabla\nabla\cdot\boldsymbol{u}
+\omega^2\boldsymbol{u}=0\quad {\rm in} ~ \mathbb{R}^3\setminus \overline{D}. 
\end{equation*}
The total field $\boldsymbol u$ consists of the incident field $\boldsymbol{u}^i$ and the scattered field $\boldsymbol v$, i.e., 
\begin{equation*}
\boldsymbol u=\boldsymbol{u}^i +\boldsymbol v. 
\end{equation*}
Since the obstacle is assumed to be rigid, we have  
\[
\boldsymbol{u}=0\quad {\rm on}~\Gamma_D.
\]
Hence the scattered field $\boldsymbol v$ satisfies
the boundary value problem
\begin{equation}\label{scatteredfield}
\begin{cases}
\mu\Delta\boldsymbol{v}+(\lambda+\mu)\nabla\nabla\cdot\boldsymbol{v}
+\omega^2\boldsymbol{v}=0\quad &{\rm in}~
\mathbb{R}^3\setminus\overline{D},\\
\boldsymbol{v}=-\boldsymbol{u}^i\quad &{\rm on}~\Gamma_D.
\end{cases}
\end{equation}

For any solution $\boldsymbol v$ of the Navier equation in 
\eqref{scatteredfield}, it has the Helmholtz decomposition 
\begin{equation}\label{HelmDeco}
\boldsymbol{v}=\boldsymbol v_{\mathfrak p}+\boldsymbol v_{\mathfrak s},
\end{equation}
where 
\[
\boldsymbol v_{\mathfrak p}=\nabla\phi,\quad \boldsymbol v_{\mathfrak s}={\bf curl}\boldsymbol\psi, \quad \nabla\cdot\boldsymbol\psi=0.
\]
Here $\boldsymbol v_{\mathfrak p}$ and $\boldsymbol v_{\mathfrak s}$ are known as the compressional and shear wave components of $\boldsymbol v$, respectively. Combining \eqref{scatteredfield} and \eqref{HelmDeco}, we may obtain the
Helmholtz equation for the scalar potential $\phi$ and the Maxwell equation for the vector potential $\boldsymbol\psi$, respectively: 
\[
\Delta\phi+\kappa_{\mathfrak p}^{2}\phi=0, \quad {\bf curl}{\bf curl} \boldsymbol{\psi}-\kappa_{\mathfrak s}^{2}\boldsymbol\psi=0.
\]
In addition, $\phi$ and $\boldsymbol\psi$ are required to satisfy the Sommerfeld radiation condition and the Silver--M\"{u}ller radiation condition, respectively: 
\begin{equation*}
\lim_{\rho\to\infty}\rho(\partial_{\rho}
\phi-\mathrm{i}\kappa_{\mathfrak p}\phi)=0, \quad
\lim_{\rho\to\infty}\rho({\bf curl}\boldsymbol\psi\times\hat{x}-\mathrm{i}\kappa_{\mathfrak s}\boldsymbol\psi)=0
, \quad \rho=|x|.
\end{equation*}

It follows from the Helmholtz decomposition and boundary condition on $\Gamma_D$ that
\[
\boldsymbol{v}=\nabla\phi+{\bf curl}\boldsymbol\psi=-\boldsymbol{u}^i.
\]
Taking the dot product and the cross product of the above equation with $\nu$, respectively, we get
\begin{equation}\label{HelmholtzDec0}
\partial_\nu\phi+\nu\cdot{\bf curl}\boldsymbol\psi=f_1,\quad 
\nu\times\nabla\phi+\nu\times{\bf curl}\boldsymbol\psi=\boldsymbol{f}_2, 
\end{equation}
where 
\[
f_1:=-\nu\cdot\boldsymbol{u}^i, \quad \boldsymbol{f}_2:=-\nu\times\boldsymbol{u}^i.
\]

In summary, the scalar potential function $\phi$ and the vector potential function $\boldsymbol\psi$ satisfy the coupled boundary value problem
\begin{align}\label{HelmholtzDec}
\begin{cases}
\Delta\phi+\kappa_{\mathfrak p}^{2}\phi=0, \quad {\bf curlcurl}\boldsymbol\psi-\kappa_{\mathfrak s}^{2}\boldsymbol\psi=0
\quad &{\rm in} ~\mathbb{R}^3\setminus\overline{D},\\
\partial_\nu\phi+\nu\cdot{\bf curl}\boldsymbol\psi=f_1 \quad 
\nu\times\nabla\phi+\nu\times{\bf curl}\boldsymbol\psi=\boldsymbol{f}_2 \quad &{\rm on} ~ \Gamma_D,\\
\displaystyle{\lim_{\rho\to\infty}\rho(\partial_{\rho}\phi-\mathrm{i}\kappa_{\mathfrak p}\phi)=0}, \quad
\displaystyle{\lim_{\rho\to\infty}\rho({\bf curl}\boldsymbol\psi\times\hat{x}-\mathrm{i}\kappa_{\mathfrak s}\boldsymbol\psi)=0}, \quad &\rho=|x|.
\end{cases}
\end{align}

The following result concerns the uniqueness of the boundary value problem \eqref{HelmholtzDec}.
\begin{theorem}\label{unique1}
	The coupled boundary value problem \eqref{HelmholtzDec} has at most one solution for $\kappa_{\mathfrak p}>0$ and $\kappa_{\mathfrak s}>0$. 
\end{theorem}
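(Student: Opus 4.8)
The plan is to reduce the statement to the classical uniqueness of the exterior Dirichlet problem for the Navier equation. Let $(\phi_1,\boldsymbol\psi_1)$ and $(\phi_2,\boldsymbol\psi_2)$ be two solutions of \eqref{HelmholtzDec} with the same data, and set $\phi=\phi_1-\phi_2$, $\boldsymbol\psi=\boldsymbol\psi_1-\boldsymbol\psi_2$. Then $(\phi,\boldsymbol\psi)$ solves \eqref{HelmholtzDec} with $f_1=0$ and $\boldsymbol f_2=0$, and it suffices to show $\phi\equiv0$ and $\boldsymbol\psi\equiv0$ in $\mathbb R^3\setminus\overline D$.

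First I would reassemble the elastic field $\boldsymbol v:=\nabla\phi+{\bf curl}\boldsymbol\psi$. A direct computation using $\Delta\phi+\kappa_{\mathfrak p}^2\phi=0$ and ${\bf curl}{\bf curl}\boldsymbol\psi-\kappa_{\mathfrak s}^2\boldsymbol\psi=0$ shows that $\boldsymbol v$ satisfies the Navier equation in $\mathbb R^3\setminus\overline D$; the Sommerfeld condition on $\phi$ and the Silver--M\"uller condition on $\boldsymbol\psi$ translate into the Kupradze radiation condition for the decomposition of $\boldsymbol v$ into its compressional part $\nabla\phi$ and shear part ${\bf curl}\boldsymbol\psi$. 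Finally, the two homogeneous boundary relations in \eqref{HelmholtzDec0} say precisely that $\nu\cdot\boldsymbol v=0$ and $\nu\times\boldsymbol v=0$ on $\Gamma_D$; since these control the normal and the full tangential component, $\boldsymbol v=0$ on $\Gamma_D$.

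Thus $\boldsymbol v$ is a radiating solution of the exterior Dirichlet problem for the Navier equation with vanishing boundary data. By the well-known uniqueness for this problem --- obtained by applying Betti's identity over the region between $\Gamma_D$ and a large sphere (the boundary term on $\Gamma_D$ vanishing because $\boldsymbol v|_{\Gamma_D}=0$), taking imaginary parts, and invoking Rellich's lemma together with unique continuation --- one concludes $\boldsymbol v\equiv0$ in $\mathbb R^3\setminus\overline D$. I would cite this result (e.g. from Kupradze) rather than reprove it; the only delicate point, and the main obstacle, is checking that the radiation conditions produced by the Helmholtz decomposition are exactly those required by the cited theorem.

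It remains to recover $\phi$ and $\boldsymbol\psi$ from $\boldsymbol v\equiv0$. From $\nabla\phi+{\bf curl}\boldsymbol\psi=0$, taking the divergence gives $\Delta\phi=0$; combined with $\Delta\phi+\kappa_{\mathfrak p}^2\phi=0$ and $\kappa_{\mathfrak p}>0$ this forces $\phi\equiv0$. Then ${\bf curl}\boldsymbol\psi=0$, so ${\bf curl}{\bf curl}\boldsymbol\psi=0$, and ${\bf curl}{\bf curl}\boldsymbol\psi-\kappa_{\mathfrak s}^2\boldsymbol\psi=0$ with $\kappa_{\mathfrak s}>0$ yields $\boldsymbol\psi\equiv0$. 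These last steps are elementary, and no gauge ambiguity survives because the Maxwell-type equation itself determines $\boldsymbol\psi$ once its curl vanishes, which completes the proof.
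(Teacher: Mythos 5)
Your argument is correct, but it follows a genuinely different route from the paper. The paper never reassembles the elastic field: it works directly with the potentials, applying Green's first identity to $\phi$ over $B_R\setminus\overline D$ and the analogous vector identity to $\boldsymbol\psi$, and the crux is showing that the \emph{coupled} boundary terms on $\Gamma_D$ cancel in the imaginary part — this is done via the surface identity $\int_{\Gamma_D}\varphi\,(\nu\cdot{\bf curl}\boldsymbol\psi)\,\mathrm{d}s=\int_{\Gamma_D}\mathbf{Grad}\varphi\cdot(\nu\times\boldsymbol\psi)\,\mathrm{d}s$ together with the splitting $\nabla\varphi=\mathbf{Grad}\varphi+\nu\partial_\nu\varphi$; Rellich's lemma is then applied separately to $\phi$ and to $\boldsymbol\psi$. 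You instead form $\boldsymbol v=\nabla\phi+{\bf curl}\boldsymbol\psi$, observe that the two homogeneous boundary relations give exactly $\boldsymbol v|_{\Gamma_D}=0$, and invoke Kupradze's uniqueness for the exterior Dirichlet problem of the Navier equation, then recover $\phi\equiv0$ and $\boldsymbol\psi\equiv0$ by the elementary divergence/curl argument (which is clean and complete as written). Your approach buys brevity and makes transparent that the coupled problem is equivalent to the original scattering problem, at the cost of importing the classical elasticity result and of the one point you correctly flag as delicate: you must verify that the Sommerfeld condition on $\phi$ forces $\nabla\phi$ to satisfy the compressional Kupradze condition (derivatives of radiating Helmholtz solutions are radiating) and that the Silver--M\"uller condition on $\boldsymbol\psi$ forces the Cartesian components of ${\bf curl}\boldsymbol\psi$ to be radiating Helmholtz solutions with wavenumber $\kappa_{\mathfrak s}$; both are standard (e.g., Colton--Kress) but should be spelled out. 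The paper's route buys self-containedness within the potential framework, which is the setting used for the rest of the analysis.
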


\begin{proof}
It suffices to show that $\phi=0$ and $\boldsymbol{\psi}=0$ in $\mathbb{R}^3\setminus\overline{D}$ when $f_1=0, \boldsymbol{f}_2=0$. Let $B_R$ be a ball with radius $R>0$ such that $D\subset B_R$ and $\Gamma_{B} $ be the boundary of $B_R$. Denote by $\Omega$ the bounded domain $\Omega=B_R\setminus\overline{D}$ enclosed by $\Gamma_D$ and $\Gamma_B$. Using the first Green's theorem \cite[$(2.2)$ and $(6.2)$]{DR-book2} and noting $\nabla\cdot\boldsymbol\psi=0$, we have
\begin{align*}
\int_{\Gamma_B}\phi\partial_\nu\bar{\phi}\,\mathrm{d}s&=\int_{\Omega}\big(\phi\Delta\bar{\phi}+\nabla\phi\cdot\nabla\bar{\phi}\big)\,\mathrm{d}x+\int_{\Gamma_D}\phi\partial_\nu\bar{\phi}\,\mathrm{d}s\\
&=\int_{\Omega}\big(-\kappa_{\mathfrak p}^2|\phi|^2+|\nabla\phi|^2\big)\,\mathrm{d}x+\int_{\Gamma_D}\phi\partial_\nu\bar{\phi}\,\mathrm{d}s
\end{align*}
and 
\begin{align*}
\int_{\Gamma_B}\big({\bf curl}\boldsymbol{\bar\psi}\times\hat{x}\big)\cdot\boldsymbol{\psi}\,\mathrm{d}s&=\int_{\Gamma_B}\big(\hat{x}\times\boldsymbol{\psi}\big)\cdot{\bf curl}\boldsymbol{\bar\psi}\,\mathrm{d}s\\ &=\int_{\Omega}\big(\boldsymbol{\psi}\cdot\Delta\boldsymbol{\bar\psi}+{\bf curl}\boldsymbol{\psi}\cdot{\bf curl}\boldsymbol{\bar\psi}\big)\,\mathrm{d}x+\int_{\Gamma_D}\big(\nu\times\boldsymbol{\psi}\big)\cdot{\bf curl}\boldsymbol{\bar\psi}\,\mathrm{d}s\\
&=\int_{\Omega}\big(-\kappa_{\mathfrak s}^2|\boldsymbol{\psi}|^2+|{\bf curl}\boldsymbol{\psi}|^2\big)\,\mathrm{d}x+\int_{\Gamma_D}\big({\bf curl}\boldsymbol{\bar\psi}\times\nu\big)\cdot\boldsymbol{\psi}\,\mathrm{d}s.
\end{align*}

Using the boundary condition \eqref{HelmholtzDec0}, the relation between the gradient and the surface gradient
	\begin{align*}
		\nabla\varphi=\mathbf{Grad}\varphi+\nu \partial_\nu\varphi,
	\end{align*}
	and the identity (cf. \cite[Page 204]{DR-book2})
	\begin{align*}
		\int_{\Gamma_D}\varphi (\nu\cdot{\bf curl}\boldsymbol\psi)\,\mathrm{d}s=\int_{\Gamma_D}\mathbf{Grad}\varphi\cdot(\nu\times\boldsymbol{\psi})\,\mathrm{d}s,
	\end{align*}
	we obtain 
	\begin{align}\label{eqn1}
		\Im\int_{\Gamma_D}\big(\phi\partial_\nu\bar{\phi}+({\bf curl}\boldsymbol{\bar\psi}\times\nu)\cdot\boldsymbol{\psi}\big)\,\mathrm{d}s&=\Im\int_{\Gamma_D}\big(-\phi({\bf curl}\boldsymbol{\bar\psi}\cdot\nu)-(\nabla\bar{\phi}\times\nu)\cdot\boldsymbol{\psi}\big)\,\mathrm{d}s\notag \\
		&=-\Im\int_{\Gamma_D}\big(\mathbf{Grad}\phi\cdot(\nu\times\boldsymbol{\bar\psi})+\nabla\bar{\phi}\cdot(\nu\times\boldsymbol{\psi})\big)\,\mathrm{d}s\notag \\
		&=-\Im\int_{\Gamma_D}\big(\mathbf{Grad}\phi\cdot(\nu\times\boldsymbol{\bar\psi})+\mathbf{Grad}\bar{\phi}\cdot(\nu\times\boldsymbol{\psi})\big)\,\mathrm{d}s\notag \\
		&=0.
	\end{align}
It follows from the radiation conditions \eqref{HelmholtzDec} that
	\begin{equation} \label{eqn2}
		\int_{\Gamma_B}\big(|\partial_\nu\phi|^2+\kappa_{\mathfrak p}^2|\phi|^2+2\kappa_{\mathfrak	p}\Im(\phi\partial_\nu\bar{\phi})\big)\,\mathrm{d}s=\int_{\Gamma_B}|\partial_\nu\phi-\mathrm{i}\kappa_{\mathfrak p}\phi|^2\,\mathrm{d}s\to0
\end{equation}		
and
\begin{equation}\label{eqn3}
		\int_{\Gamma_B}\big(|{\bf curl}\boldsymbol\psi\times\hat{x}|^2 +\kappa_{\mathfrak s}^2|\boldsymbol\psi|^2+2\kappa_{\mathfrak s}\Im(({\bf curl}\boldsymbol{\bar\psi}\times\hat{x})\cdot\boldsymbol{\psi})\big)\,\mathrm{d}s=\int_{\Gamma_B}|{\bf curl}\boldsymbol\psi\times\hat{x}-\mathrm{i}\kappa_{\mathfrak s}\boldsymbol\psi|^2\,\mathrm{d}s\to0
	\end{equation}
	as $R\to\infty$. Since $\kappa_{\mathfrak p}>0$ and $\kappa_{\mathfrak s}>0$, it follows from \eqref{eqn1}--\eqref{eqn3} that
	\begin{align*}
		\lim_{R\to\infty}\int_{\Gamma_B}\Big(\frac{1}{\kappa_{\mathfrak p}}|\partial_\nu\phi|^2+\kappa_{\mathfrak p}|\phi|^2+\frac{1}{\kappa_{\mathfrak s}}|{\bf curl}\boldsymbol\psi\times\hat{x}|^2+\kappa_{\mathfrak s}|\boldsymbol\psi|^2\Big)\,\mathrm{d}s=0.
	\end{align*}
We have from Rellich's lemma that $\phi=0$ and $\boldsymbol{\psi}=0$ in $\mathbb{R}^3\setminus\overline{D}$, which completes the proof.
\end{proof}

It is known that a radiating solution of \eqref{scatteredfield} has the asymptotic behavior of the form
\[
\boldsymbol{v}(x)=\frac{\mathrm{e}^{\mathrm{i}\kappa_{\mathfrak p}|x|}}{|x|}\boldsymbol{v}_{\mathfrak p}^\infty(\hat{x})+\frac{\mathrm{e}^{\mathrm{i}\kappa_{\mathfrak s}|x|}}{|x|}\boldsymbol{v}_{\mathfrak s}^\infty(\hat{x})+\mathcal{O}\left(\frac{1}{|x|^2}\right),
\quad |x|\to\infty
\]
uniformly in all directions $\hat{x}:=x/|x|$, where $\boldsymbol{v}_{\mathfrak p}^\infty$ and $\boldsymbol{v}_{\mathfrak s}^\infty$, defined on
the unit sphere $\mathbb{S}^2=\{\hat{x}\in\mathbb{R}^3: |\hat{x}|=1\}$, are called the compressional and shear far-field patterns of $\boldsymbol{v}$, respectively. 

\begin{remark}
By extending the result \cite[Theorem 3.1]{DLL2019} to three dimensions and using \cite[Theorem 6.9]{DR-book2}, we can establish the relationship between the far-field pattern of the compressional wave $\boldsymbol{v}_p$ or the shear wave $\boldsymbol{v}_s$ and the far-field pattern of the scalar potential $\phi$ or the vector potential $\boldsymbol{\psi}$, i.e., 
\begin{equation}\label{behaviour relation}
	\boldsymbol{v}_{\mathfrak p}^\infty(\hat{x}) =\mathrm{i}\kappa_{\mathfrak p}\phi_{\infty}(\hat{x})\hat{x}, \quad \boldsymbol{v}_{\mathfrak s}^\infty(\hat{x})=\mathrm{i}\kappa_{\mathfrak s}\hat{x}\times\boldsymbol{\psi}_{\infty},
\end{equation}
where the complex-valued
functions $\phi_\infty(\hat{x})$ and $\boldsymbol{\psi}_\infty(\hat{x})$ are the far-field
patterns corresponding to $\phi$ and $\boldsymbol{\psi}$, respectively. Therefore, in view of \eqref{HelmDeco} and \eqref{behaviour relation}, we can obtain the compressional and shear wave components $\boldsymbol{v}_{\mathfrak p}, \boldsymbol{v}_{\mathfrak s}$ and the corresponding far-field patterns $\boldsymbol{v}_{\mathfrak p}^\infty, \boldsymbol{v}_{\mathfrak s} ^\infty$ by solving the coupled boundary value problem \eqref{HelmholtzDec}.
\end{remark}


\section{Boundary integral equations}

In this section, we deduce the coupled system of boundary integral equations for solving the boundary value problem \eqref{HelmholtzDec}. 

Define a vector potential
\begin{align*}
\boldsymbol{A}\boldsymbol{g}(x):=\int_{\Gamma_D}\Phi(x,y;\kappa)\boldsymbol{g}(y)\,\mathrm{d}s(y),  \quad x\in\mathbb{R}^3\setminus\Gamma_D, 
\end{align*}
where $\boldsymbol g$ is a continuous tangential vector function on $\Gamma_D$ and 
\begin{equation}\label{fs3d}
\Phi(x,y;\kappa)=\frac{1}{4\pi}\dfrac{\mathrm{e}^{\mathrm{i}\kappa|x-y|}}{|x-y|}, \quad x\neq y
\end{equation}
is the fundamental solution to the three-dimensional Helmholtz equation. Using \cite[Theorem 6.13]{DR-book2}, we have the jump relation
\begin{align}\label{jumprel}
{\bf curl}\boldsymbol{A}_{\pm}\boldsymbol{g}(x)=\int_{\Gamma_D}\nabla_x\Phi(x,y;\kappa)\times\boldsymbol{g}(y)\,\mathrm{d}s(y)\mp\frac{1}{2}\nu(x)\times\boldsymbol{g}(x),
\end{align}
where
$$
{\bf curl}\boldsymbol{A}_{\pm}\boldsymbol{g}(x):=\lim_{h\to+0}{\bf curl}\boldsymbol{A}\boldsymbol{g}(x\pm h\nu(x)).
$$
Meanwhile, we have from the Maxwell equation that 
 $$({\bf curlcurlcurl}\boldsymbol{A}\boldsymbol{g})_{\pm}(x)=\kappa^2({\bf curl}\boldsymbol{A}\boldsymbol{g})_{\pm}(x).$$

We represent the solutions of \eqref{HelmholtzDec} by  
\begin{align} \label{singlelayer}
\begin{split}
\phi(x)&=\int_{\Gamma_D}\Phi(x,y;\kp)g_1(y)\,\mathrm{d}s(y), \quad x\in\mathbb{R}^3\setminus\Gamma_D\\
\boldsymbol{\psi}(x)&=\frac{1}{\ks^2}{\bf curlcurl} \int_{\Gamma_D}\Phi(x,y;\ks)\boldsymbol{g}_2(y)\,\mathrm{d}s(y),\quad x\in\mathbb{R}^3\setminus\Gamma_D,
\end{split}
\end{align}
where $g_1$ is a scalar density function and $\boldsymbol{g}_2$ is a tangential vector density function satisfying $\boldsymbol{g}_2\cdot\nu=0$. It can be verified from simple calculations that the corresponding far-field patterns can be represented as follows
\begin{align} \label{singlelayer_far}
\phi_\infty(\hat{x})=\frac{1}{4\pi}\int_{\Gamma_D}\mathrm{e}^{-\mathrm{i}\kappa_{\mathfrak p}\hat{x}\cdot y}
g_1(y)\,\mathrm{d}s(y), \quad
\boldsymbol{\psi}_\infty(\hat{x})=\frac{1}{4\pi}\hat{x}\times\int_{\Gamma_D}
\boldsymbol{g}_2(y)\times\hat{x} \mathrm{e}^{-\mathrm{i}\kappa_{\mathfrak s}\hat{x}\cdot y}\,\mathrm{d}s(y). 
\end{align}

Letting $x\in\mathbb{R}^3\setminus\overline{D}$ approach the boundary $\Gamma_D$
in \eqref{singlelayer}, using the jump relations \eqref{jumprel} and 
\begin{align*}
\nabla\phi_\pm(x)=\int_{\Gamma_D}\nabla_x\Phi(x,y;\kappa_{\mathfrak p})g_1(y)\,\mathrm{d}s(y)\mp\frac{1}{2}
\nu(x)g_1(x),
\end{align*}
and the boundary condition \eqref{HelmholtzDec}, we deduce the coupled boundary integral equations for the density functions $g_1$ and $\boldsymbol g_2$ on $\Gamma_D$:  
\begin{align}
\begin{split}\label{boundaryIE0}
f_1(x)&=-\frac{1}{2}g_1(x)+\int_{\Gamma_D}\frac{\partial\Phi(x,y;\kappa_{\mathfrak p})}{\partial\nu(x)}g_1(y)\,\mathrm{d}s(y) 
\\
&\hspace{2cm}+\nu(x)\cdot{\bf curl}_x \int_{\Gamma_D}\Phi(x,y;\kappa_{\mathfrak s})\boldsymbol{g}_2(y)\,\mathrm{d}s(y),
\\
\boldsymbol{f}_2(x)&=\nu(x)\times\nabla_x\int_{\Gamma_D}\Phi(x,y;\kappa_{\mathfrak p}) g_1(y)\,\mathrm{d}s(y)\\
&\hspace{2cm}+\nu(x)\times{\bf curl}_x\int_{\Gamma_D}\Phi(x,y;\kappa_{\mathfrak s})\boldsymbol{g}_2(y)\,\mathrm{d}s(y)  +\frac{1}{2}\boldsymbol{g}_2(x).
\end{split}
\end{align}

Denote by $C(\Gamma_D)$ the space of all continuous functions on $\Gamma_D$, and $\boldsymbol{C}_T(\Gamma_D)$ the space of all continuous tangential vector fields on $\Gamma_D$. The following result concerns the uniqueness of the solution to \eqref{boundaryIE0}.

\begin{theorem}\label{unique2}
The boundary integral equations \eqref{boundaryIE0} has at most one solution in $C(\Gamma_D)\otimes \boldsymbol{C}_T(\Gamma_D)$ provided that $\kappa_{\mathfrak p}$ is not an interior Dirichlet eigenvalue for the Helmholtz equation in $D$ and $\kappa_{\mathfrak s}$ is not an interior Maxwell eigenvalue in $D$ with the homogeneous perfectly conducting boundary condition.
\end{theorem}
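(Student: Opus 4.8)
The plan is the classical three–step argument: use the exterior uniqueness of Theorem~\ref{unique1} to kill the fields outside $D$, use the continuity and jump properties of the layer potentials together with the two eigenvalue hypotheses to kill them inside $D$, and finally read the densities off from the jump relations. So let $(g_1,\boldsymbol g_2)\in C(\Gamma_D)\otimes\boldsymbol C_T(\Gamma_D)$ solve \eqref{boundaryIE0} with $f_1=0$ and $\boldsymbol f_2=0$, and let $\phi,\boldsymbol\psi$ be defined by \eqref{singlelayer} with these densities. Then $\phi$ solves $\Delta\phi+\kappa_{\mathfrak p}^2\phi=0$ off $\Gamma_D$ with the Sommerfeld condition, and since $({\bf curl}\,{\bf curl}\,{\bf curl}\,\boldsymbol A\boldsymbol g_2)_\pm=\kappa_{\mathfrak s}^2({\bf curl}\,\boldsymbol A\boldsymbol g_2)_\pm$ one also checks that $\boldsymbol\psi$ solves ${\bf curl}\,{\bf curl}\,\boldsymbol\psi-\kappa_{\mathfrak s}^2\boldsymbol\psi=0$ off $\Gamma_D$ with the Silver--M\"uller condition. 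By the very derivation of \eqref{boundaryIE0} from \eqref{HelmholtzDec0}, the restriction of $(\phi,\boldsymbol\psi)$ to $\mathbb R^3\setminus\overline D$ is therefore a solution of the coupled boundary value problem \eqref{HelmholtzDec} with zero data, so Theorem~\ref{unique1} gives $\phi=0$ and $\boldsymbol\psi=0$ in $\mathbb R^3\setminus\overline D$. In particular the exterior limits $\phi^+$, $\partial_\nu\phi^+$, $(\nu\times\boldsymbol\psi)^+$ and $(\nu\times{\bf curl}\,\boldsymbol\psi)^+$ on $\Gamma_D$ all vanish.

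The next step transfers this to the interior domain $D$. Since $\phi$ is a scalar single-layer potential it is continuous across $\Gamma_D$, hence $\phi^-=\phi^+=0$ on $\Gamma_D$; thus $\phi|_D$ solves the Helmholtz equation with wavenumber $\kappa_{\mathfrak p}$ and homogeneous Dirichlet data, and because $\kappa_{\mathfrak p}$ is not an interior Dirichlet eigenvalue we get $\phi=0$ in $D$. For the vector potential I would first record that $\nu\times\boldsymbol\psi$ is continuous across $\Gamma_D$: using $\Delta(\boldsymbol A\boldsymbol g_2)=-\kappa_{\mathfrak s}^2\boldsymbol A\boldsymbol g_2$ off $\Gamma_D$ one writes $\boldsymbol\psi=\boldsymbol A\boldsymbol g_2+\kappa_{\mathfrak s}^{-2}\nabla(\nabla\cdot\boldsymbol A\boldsymbol g_2)$, where the vector single layer $\boldsymbol A\boldsymbol g_2$ is continuous, and since $\boldsymbol g_2$ is tangential $\nabla\cdot\boldsymbol A\boldsymbol g_2$ equals the scalar single-layer potential with density $\mathbf{Div}\,\boldsymbol g_2$ (surface divergence theorem on the closed surface $\Gamma_D$), hence is also continuous, so that the jump of $\nabla(\nabla\cdot\boldsymbol A\boldsymbol g_2)$ is purely normal and is annihilated by $\nu\times$. (Equivalently, one may invoke the standard jump and continuity relations for vector potentials in \cite{DR-book2}.) Consequently $(\nu\times\boldsymbol\psi)^-=(\nu\times\boldsymbol\psi)^+=0$, so $\boldsymbol\psi|_D$ solves ${\bf curl}\,{\bf curl}\,\boldsymbol\psi-\kappa_{\mathfrak s}^2\boldsymbol\psi=0$ in $D$ with the homogeneous perfectly conducting boundary condition $\nu\times\boldsymbol\psi=0$; the hypothesis that $\kappa_{\mathfrak s}$ is not an interior Maxwell eigenvalue of this type forces $\boldsymbol\psi=0$ in $D$.

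It remains to recover the densities from the jump relations. The single-layer normal-derivative jump $\nabla\phi_\pm=\int_{\Gamma_D}\nabla_x\Phi(\cdot,y;\kappa_{\mathfrak p})g_1\,\mathrm ds\mp\tfrac12\nu g_1$ gives, after dotting with $\nu$, $g_1=\partial_\nu\phi^--\partial_\nu\phi^+=0$. For $\boldsymbol g_2$, the identity $({\bf curl}\,{\bf curl}\,{\bf curl}\,\boldsymbol A\boldsymbol g_2)_\pm=\kappa_{\mathfrak s}^2({\bf curl}\,\boldsymbol A\boldsymbol g_2)_\pm$ shows ${\bf curl}\,\boldsymbol\psi={\bf curl}\,\boldsymbol A\boldsymbol g_2$ off $\Gamma_D$, so applying $\nu\times$ to the jump relation \eqref{jumprel} and using that $\boldsymbol g_2$ is tangential (so $\nu\times(\nu\times\boldsymbol g_2)=-\boldsymbol g_2$) gives $(\nu\times{\bf curl}\,\boldsymbol\psi)^--(\nu\times{\bf curl}\,\boldsymbol\psi)^+=-\boldsymbol g_2$; since $\boldsymbol\psi$ vanishes on both sides of $\Gamma_D$ the left-hand side is zero and $\boldsymbol g_2=0$, which proves uniqueness.

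I expect the technical heart of the argument to be exactly the continuity of $\nu\times\boldsymbol\psi$ across $\Gamma_D$ and, more generally, keeping straight which tangential and normal components of the vector potentials actually jump: the representation \eqref{singlelayer} is built from ${\bf curl}\,{\bf curl}$, so it must first be massaged into a combination of scalar and vector single layers before the elementary scalar jump relations can be applied. A secondary point to watch is regularity, namely that $\mathbf{Div}\,\boldsymbol g_2$ be meaningful and that the strongly singular operators in \eqref{boundaryIE0} genuinely act on the chosen density space; this is handled by the usual H\"older-space (or weak, Galerkin) setting for electromagnetic boundary integral equations and by the smoothing effect of the analytic boundary $\Gamma_D$.
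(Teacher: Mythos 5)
Your argument for $\phi$ and $g_1$ is exactly the paper's: invoke Theorem \ref{unique1} to kill the exterior fields, use the continuity of the scalar single layer plus the non-eigenvalue hypothesis on $\kappa_{\mathfrak p}$ to kill $\phi$ in $D$, and read off $g_1$ from the jump of $\partial_\nu\phi$. Where you diverge is the treatment of $\boldsymbol g_2$. The paper substitutes $g_1=0$ into the second equation of \eqref{boundaryIE0} to obtain $(\boldsymbol I+\boldsymbol M)\boldsymbol g_2=0$ and then simply cites \cite[Theorem 4.23]{DR-book1} for the injectivity of $\boldsymbol I+\boldsymbol M$ when $\kappa_{\mathfrak s}$ is not a Maxwell eigenvalue; you instead prove that injectivity from scratch, by showing $\nu\times\boldsymbol\psi$ is continuous across $\Gamma_D$, solving the homogeneous interior perfectly conducting problem to get $\boldsymbol\psi=0$ in $D$, and recovering $\boldsymbol g_2$ from the jump of $\nu\times{\bf curl}\boldsymbol A\boldsymbol g_2$. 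Both routes are standard and your jump computations are correct; the trade-off is precisely the regularity issue you flag at the end. The paper's route stays entirely within $\boldsymbol C_T(\Gamma_D)$, since $\boldsymbol M$ is a well-defined (compact) operator on continuous tangential fields, whereas your route needs the boundary traces of ${\bf curl}\,{\bf curl}\,\boldsymbol A\boldsymbol g_2$, i.e.\ it needs $\mathrm{Div}\,\boldsymbol g_2$ to exist (classically one works in the H\"older space of tangential fields with H\"older continuous surface divergence). For a density that is merely continuous your decomposition $\boldsymbol\psi=\boldsymbol A\boldsymbol g_2+\kappa_{\mathfrak s}^{-2}\nabla(\nabla\cdot\boldsymbol A\boldsymbol g_2)$ is not directly justified, so strictly at the stated level of generality the citation-based argument is the cleaner one; your version is more self-contained and makes visible exactly what the cited theorem is doing.
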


\begin{proof}
	It suffices to show that $g_1=0$ and $\boldsymbol{g}_2=0$ if $f_1=0$ and $\boldsymbol{f}_2=0$. By the uniqueness result in Theorem \ref{unique1}, we have
	$$
	\phi(x)=0, \quad \boldsymbol{\psi}(x)=0, \quad x\in\mathbb{R}^3\setminus\overline{D}.
	$$
	It follows from the continuity of the single layer potential that $\phi(x)=0$ for $x\in\Gamma_D$. Since $\kappa_{\mathfrak p}$ is not an interior Dirichlet eigenvalue for the Helmholtz equation in $D$, we get $\phi(x)=0$ for $x\in D$. Using the jump relation of the derivative of the scalar single-layer potential, we obtain $g_1=0$. 
	
	Define the integral operator $\boldsymbol{M}: \boldsymbol{C}_T(\Gamma_D)\to\boldsymbol{C}_T(\Gamma_D)$ by
	\begin{align*}
	(\boldsymbol{M}\boldsymbol{g}_2)(x):=2\nu(x)\times{\bf curl}\int_{\Gamma_D}\Phi(x,y;\kappa_{\mathfrak s})\boldsymbol{g}_2(y)\,\mathrm{d}s(y).
	\end{align*}
	Substituting $g_1=0$ into the second equation of \eqref{boundaryIE0}, we obtain
	$$
	\boldsymbol{g}_2(x)+(\boldsymbol{M}\boldsymbol{g}_2)(x)=0.
	$$
	Since $\kappa_{\mathfrak s}$ is not an interior Maxwell eigenvalue in $D$ with the perfectly conducting boundary condition $\nu\times\boldsymbol{\psi}=0$,  we obtain from \cite[Theorem 4.23]{DR-book1} that $\boldsymbol{g}_2=0$, which completes the proof.
\end{proof}

Next we introduce the single-layer boundary integral operators $S^\sigma, \sigma=\mathfrak{p,s}$ and the normal derivative boundary integral operator $K$ for $g\in C(\Gamma_{D})$ and $\bg \in\boldsymbol{C}_T(\Gamma_{D})$ by
\begin{align*}
	(S^\mathfrak{p} g)(x)&=2\int_{\Gamma_D} \Phi(x,y;\kappa_\mathfrak{p})g(y)\,\mathrm{d}s(y), \quad x\in\Gamma_D, 
	\\
	(S^\mathfrak{s} \bg)(x)&=2\int_{\Gamma_D} \Phi(x,y;\kappa_\mathfrak{s})\bg(y)\,\mathrm{d}s(y), \quad x\in\Gamma_D, 
	\\
	(Kg)(x)&=2\int_{\Gamma_D}\frac{\partial\Phi(x,y;\kappa_\mathfrak{p})}
	{\partial\nu(x)}g(y)\,\mathrm{d}s(y), \quad x\in\Gamma_D,
\end{align*}
and we also define boundary integral operators $N$, $H$  and $M$ by
\begin{align*}
(N\bg)(x)&=2\nu(x)\cdot{\bf curl}_x \int_{\Gamma_D}\Phi(x,y;\kappa_{\mathfrak s})\bg(y)\,\mathrm{d}s(y),\\
(Hg)(x)&=2\nu(x)\times\nabla_x\int_{\Gamma_D}\Phi(x,y;\kappa_{\mathfrak p}) g(y)\,\mathrm{d}s(y),\\
(M\bg)(x)&=2\nu(x)\times{\bf curl}_x\int_{\Gamma_D}\Phi(x,y;\kappa_{\mathfrak s})\bg(y)\,\mathrm{d}s(y),
\end{align*}
where $g$ and $\bg$ are the scalar and vector densities, respectively. Then the system \eqref{boundaryIE0} can be rewritten as
\begin{align} \label{boundaryoperator}
	\begin{cases}
		-g_1+Kg_1+N\boldsymbol{g}_2=f_1,\\
		Hg_1+\boldsymbol{g}_2+M\boldsymbol{g}_2=\boldsymbol{f}_2.
	\end{cases}
\end{align}

By the decomposition in \cite[$(2.5)-(2.6)$]{GG2004}, the kernels of weakly singular integral operators $S^\sigma$ and $K$ can be decomposed into a general form as
\begin{align*}
	m(x,y)=\frac{1}{|x-y|}m_1(x,y)+m_2(x,y)
\end{align*}
with $m_i$ $(i=1, 2)$ given by
\begin{align*}
	m_i(x,y)=m_{i,1}(x,y)+m_{i,2}(x,y)\frac{(x-y)\cdot\nu(y)}{|x-y|^2}+m_{i,3}(x,y)\frac{(x-y)\cdot\nu(x)}{|x-y|^2},
\end{align*}
where each $m_{i,j} (i=1,2, j=1,2,3)$ is infinitely continuously differentiable on $\mathbb{R}^3\times\mathbb{R}^3$ under the assumption that $\Gamma_D$ is analytic. Hence, we have for $x\in\Gamma_D$ that
\begin{align} \label{operatorsD}
	\begin{split}
		(S^\mathfrak{p} g)(x)&=\int_{\Gamma_D}
		\Big(\frac{1}{|x-y|}s^{\mathfrak{p}}_{1}(x,y)+s^{\mathfrak{p}}_{2}(x,y)\Big)g(y)\,\mathrm{d}s(y),\\
		(S^\mathfrak{s} \bg)(x)&=\int_{\Gamma_D}
		\Big(\frac{1}{|x-y|}s^{\mathfrak{s}}_{1}(x,y)+s^{\mathfrak{s}}_{2}(x,y)\Big)\bg(y)\,\mathrm{d}s(y),\\
		(Kg)(x)&=\int_{\Gamma_D}\Big(\frac{1}{|x-y|}k^\mathfrak{p}_{1}(x,y)+k^\mathfrak{p}_{2}(x,y)\Big)g(y)\,\mathrm{d}s(y),
	\end{split}
\end{align}
where
\begin{align*}
	s^\sigma_{1}(x,y)&=\frac{\cos\big(\kappa_{\sigma}|x-y|\big)}{2\pi}, \quad s^\sigma_{2}(x,y)=\begin{cases}
		\frac{\mathrm{i}\sin\big(\kappa_{\sigma}|x-y|\big)}{2\pi|x-y|}, &x\not=y,\\
		\frac{\mathrm{i}\kappa_{\mathfrak{\sigma}}}{2\pi},  &x=y,
	\end{cases},\quad  \sigma=\mathfrak{p}, \mathfrak{s},  \\
	k^{\mathfrak{p}}_{1}(x,y)&=-\frac{\nu(x)\cdot(x-y)}{|x-y|^2}s^{\mathfrak{p}}_{1}(x,y)+\mathrm{i}\kappa_{\mathfrak{p}}\nu(x)\cdot(x-y)s^{\mathfrak{p}}_{2}(x,y), \\
	k^{\mathfrak{p}}_{2}(x,y)&=-\frac{\nu(x)\cdot(x-y)}{|x-y|^2}\big[s^{\mathfrak{p}}_{2}(x,y)-\mathrm{i}\kappa_{\mathfrak{p}} s^{\mathfrak{p}}_{1}(x,y)\big].
\end{align*}

Similarly, the kernel of weakly singular integral operator $M$ can be decomposed into 
\begin{align*}
(M\boldsymbol{g})(x)&=\int_{\Gamma_D}\Big(\frac{1}{|x-y|}\boldsymbol{m}_1(x,y)+\boldsymbol{m}_2(x,y)\Big)\boldsymbol{g}(y)\,\mathrm{d}s(y)
\end{align*}
with $\boldsymbol{m}_i$ $(i=1, 2)$ being of the form
\begin{align*}
	\boldsymbol{m}_i(x,y)=\widetilde{m}_{i,1}(x,y)\frac{(x-y)[\nu(x)-\nu(y)]^\top}{|x-y|^2}+\widetilde{m}_{i,2}(x,y)\frac{(x-y)\cdot\nu(y)}{|x-y|^2}I+\widetilde{m}_{i,3}(x,y),
\end{align*}
where $I$ is the $3\times3$ identity matrix, $\widetilde{m}_{2,3}$ is the $3\times3$ zero matrix, and each $\widetilde{m}_{i,j} (i=1,2, j=1,2,3)$ is infinitely continuously differentiable on $\mathbb{R}^3\times\mathbb{R}^3$.
We refer to \cite[$(2.8)-(2.9)$]{GH2008} for the detailed expressions of $\widetilde{m}_{i,j}$.

Since all the boundary operators $S^{\sigma}$, $K$, $M$ have weakly singular kernels, their spectrally accurate discretization can be conveniently obtained by using the spherical harmonics. The details are given in Section \ref{SM}. However, for the integral operators $N$ and $H$, their kernels have Cauchy type strong singularities, direct discretization will introduce large errors. In order to design a spectral method for \eqref{boundaryoperator} via the Galerkin discretization, it is necessary to regularize the singularity of $N$ and $H$, which is examined in the next section.

\section{Spherical parametrization}

From now on, we assume that the boundary $\Gamma_{D}$ is an isomorphism of a unit sphere, which is a common assumption in the areas of  wave scattering and inverse scattering\cite{DR-book2}. For two vectors $\boldsymbol{a},\boldsymbol{b}\in\mathbb{R}^3$ and two $3\times3$ matrices $A=(a_1,a_2,a_3), B=(b_1,b_2,b_3)$, we define
$$
\boldsymbol{a}\otimes\boldsymbol{b}=\boldsymbol{a}\boldsymbol{b}^\top,\quad
A:B=a_1\cdot b_1 + a_2\cdot b_2 + a_3\cdot b_3.
$$

 Based on a bijective parameterization map $\boldsymbol{q}: \mathbb{S}^2\to\Gamma_D$, the boundary integral equations \eqref{boundaryoperator} can be transformed on the unit sphere $\mathbb{S}^2$. By the change of variables $x=\boldsymbol{q}(\hat{x})$, for any integrable function $g$ defined on $\Gamma_D$, it holds
\begin{align*}
	\int_{\Gamma_D}g(x)\,\mathrm{d}s(x)=\int_{\mathbb{S}^2}g(\boldsymbol{q}(\hat{x}))J_{\boldsymbol{q}}(\hat{x})\,\mathrm{d}s(\hat{x}),
\end{align*}
where $J_{\boldsymbol{q}}(\hat{x})$ is the Jacobian of the transformation $\boldsymbol{q}$.

Denote by $(\theta, \varphi)$ the spherical coordinates of any point $\hat{x}\in\mathbb{S}^2$, i.e.,
\begin{equation*}
\hat{x}=p(\theta,\varphi)=(\sin\theta\cos\varphi, \sin\theta\sin\varphi, \cos\theta)^\top, \quad\theta\in[0, \pi], ~\varphi\in[0, 2\pi),
\end{equation*}
and the corresponding Jacobian is $J_p(\theta,\varphi)=\sin\theta$.
The tangent plane at any point $\hat{x}\in\mathbb{S}^2$ are generated by the unit vectors
\begin{align*}
	\boldsymbol{e}_\theta\circ p&=\frac{\partial p}{\partial\theta}(\theta,\varphi)=(\cos\theta\cos\varphi, \cos\theta\sin\varphi, -\sin\theta)^\top,\\
	\boldsymbol{e}_\varphi\circ p&=\frac{1}{\sin\theta}\frac{\partial p}{\partial\varphi}(\theta,\varphi)=(-\sin\varphi, \cos\varphi, 0)^\top.
\end{align*}
The triplet $(\hat{x},\boldsymbol{e}_\theta,\boldsymbol{e}_\varphi)$ forms an orthonormal system. Following the notations in \cite{Louer2014,Louer2020}, we define the tangent vectors on $\Gamma_{D}$ by
\begin{align*}
	\boldsymbol{t}_1(\hat{x})&=\frac{\partial\boldsymbol{q}\circ p}{\partial\theta}\circ p^{-1}(\hat{x})=[{\bf D}_{\mathbb S^2}\bq(\hat{x})]\boldsymbol{e}_\theta(\hat{x}),\\
	\boldsymbol{t}_2(\hat{x})&=\Big(\frac{1}{\sin\theta}\frac{\partial\boldsymbol{q}\circ p}{\partial\varphi}\Big)\circ p^{-1}(\hat{x})=[{\bf D}_{\mathbb S^2}\bq(\hat{x})]\boldsymbol{e}_\varphi(\hat{x}),
\end{align*}
where the matrix $[{\bf D}_{\mathbb{S}^2}\boldsymbol{q}(\hat{x})]=\boldsymbol{t}_1\otimes\boldsymbol{e}_\theta+\boldsymbol{t}_2\otimes\boldsymbol{e}_\varphi$ maps the tangent plane to $\mathbb S^2$ at the point $\hat x$ onto the tangent plane to $\Gamma_D$ at the point $\boldsymbol q(\hat x)$, the Jacobian $J_{\boldsymbol{q}}$ and the normal vector $\nu\circ \boldsymbol{q}$ are given by 
\begin{align*}
J_{\bq}=|\boldsymbol{t}_1\times\boldsymbol{t}_2|, \quad \nu\circ \boldsymbol{q}=\frac{\boldsymbol{t}_1\times\boldsymbol{t}_2}{J_{\bq}}.
\end{align*}

By the conclusions in \cite{Louer2014,Louer2020} and the change of variables $x=\boldsymbol{q}(\hat{x})$, for any smooth $w$ and $\boldsymbol{w}=(w_1,w_2,w_3)^\top$, we may define the following surface scalar and vector curl operators on $\Gamma_D$: 
\begin{align}
	({\bf curl}_{\Gamma_D}w)\circ\bq &= \frac{1}{J_{\bq}}[{\bf D}_{\mathbb S^2}\bq(\hat{x})]{\bf curl}_{\mathbb{S}^2}(w\circ\bq)\label{vectorcurl},\\
	({\rm curl}_{\Gamma_D}\bw)\circ\bq &=-\frac{1}{J_{\bq}}[{\bf D}_{\mathbb{S}^2}\bq(\hat{x})]^\top:{\bf curl}_{\mathbb{S}^2}(\bw\circ\bq),\label{scalarcurl}
\end{align}
where ${\bf curl}_{\mathbb{S}^2}(w\circ\bq) =\big({\mathbf{Grad}_{\mathbb{S}^2}} (w\circ\bq)\big) \times \hat{x}$, and
\begin{align}
	{\mathbf{Grad}_{\mathbb{S}^2}}( w\circ\bq) &= \Big(\frac{\partial w\circ\boldsymbol{q}\circ p}{\partial\theta}\Big)\circ p^{-1}\boldsymbol{e}_\theta+\Big(\frac{1}{\sin\theta}\frac{\partial w\circ\boldsymbol{q}\circ p}{\partial\varphi}\Big)\circ p^{-1}\boldsymbol{e}_\varphi. \label{PGrad2}
\end{align}
Here ${\bf curl}_{\mathbb{S}^2} (\boldsymbol{w}\circ\boldsymbol{q})$ is a matrix whose  $j$-th column is ${\bf curl}_{\mathbb{S}^2} w_j\circ\boldsymbol{q}$. Using \eqref{PGrad2} and the representation of ${\bf D}_{\mathbb{S}^2}\bq$ and ${\bf curl}_{\mathbb{S}^2}(\bw\circ\bq)$, we obtain 
\begin{align}\label{curls2}
	\big([{\bf D}_{\mathbb{S}^2}\bq]^\top:{\bf curl}_{\mathbb{S}^2}(\bw\circ\bq)\big)\circ p=\frac{1}{\sin\theta}(\boldsymbol{t}_1\circ p)\cdot\frac{\partial\boldsymbol{w}\circ\boldsymbol{q}\circ p}{\partial\varphi}-(\boldsymbol{t}_2\circ p)\cdot\frac{\partial\boldsymbol{w}\circ\boldsymbol{q}\circ p}{\partial\theta}.
\end{align}

After the parametrization, the boundary operators $S^{\sigma}$, $K$, $M$ can be equivalently rewritten as:
\begin{align} \label{operatorsB}
	\begin{split}
		(\mathcal{S}^{\mathfrak p} G)(\hat{x})&=\int_{\mathbb{S}^2}\Big(\frac{1}{|\hat{x}-\hat{y}|}\widetilde{S}^{\mathfrak p}_{1}(\hat{x},\hat{y})+\widetilde{S}^{\mathfrak p}_{2}(\hat{x},\hat{y})\Big)G(\hat{y})\,\mathrm{d}s(\hat{y}),\\
		(\mathcal{S}^{\mathfrak s}  \boldsymbol{G})(\hat{x})&=\int_{\mathbb{S}^2}\Big(\frac{1}{|\hat{x}-\hat{y}|}\widetilde{S}^{\mathfrak s}_{1}(\hat{x},\hat{y})+\widetilde{S}^{\mathfrak s}_{2}(\hat{x},\hat{y})\Big) \boldsymbol{G}(\hat{y})\,\mathrm{d}s(\hat{y}),\\
		(\mathcal{K}G)(\hat{x})&=\int_{\mathbb{S}^2}\Big(\frac{1}{|\hat{x}-\hat{y}|}\widetilde{K}_{1}(\hat{x},\hat{y})+\widetilde{K}_{2}(\hat{x},\hat{y})\Big)G(\hat{y})\,\mathrm{d}s(\hat{y}),
		\\
		(\mathcal{M}\boldsymbol{G})(\hat{x})&=\int_{\mathbb{S}^2}\Big(\frac{1}{|\hat{x}-\hat{y}|}\widetilde{M}_{1}(\hat{x},\hat{y})+\widetilde{M}_{2}(\hat{x},\hat{y})\Big)\boldsymbol{G}(\hat{y})\,\mathrm{d}s(\hat{y}),
	\end{split}
\end{align}
where $G=g\circ\boldsymbol{q},~ \boldsymbol{G}=\boldsymbol{g}\circ\boldsymbol{q}$, and the kernels for $\sigma={\mathfrak p}$ or ${\mathfrak s}$ are given by
\begin{align*}
	&R(\hat{x},\hat{y})=\frac{|\hat{x}-\hat{y}|}{|\boldsymbol{q}(\hat{x})-\boldsymbol{q}(\hat{y})|},\\
	&\widetilde{S}^{\sigma}_{1}(\hat{x},\hat{y})=R(\hat{x},\hat{y})s^\sigma_{1}(\boldsymbol{q}(\hat{x}),\boldsymbol{q}(\hat{y}))J_{\boldsymbol q}(\hat{y}), 
	&&\widetilde{S}^{\sigma}_{2}(\hat{x},\hat{y})=s^\sigma_{2}(\boldsymbol{q}(\hat{x}),\boldsymbol{q}(\hat{y}))J_{\boldsymbol q}(\hat{y}), \\
	&\widetilde{K}_{1}(\hat{x},\hat{y})=R(\hat{x},\hat{y})k_{1}^{\mathfrak{p}}(\boldsymbol{q}(\hat{x}),\boldsymbol{q}(\hat{y}))J_{\boldsymbol q}(\hat{y})J_{\boldsymbol q}(\hat{x}), 
	&&\widetilde{K}_{2}(\hat{x},\hat{y})=k_{2}^{\mathfrak{p}}(\boldsymbol{q}(\hat{x}),\boldsymbol{q}(\hat{y}))J_{\boldsymbol q}(\hat{y})J_{\boldsymbol q}(\hat{x}),\\
	&\widetilde{M}_{1}(\hat{x},\hat{y})=R(\hat{x},\hat{y}){\boldsymbol m}_1(\boldsymbol{q}(\hat{x}),\boldsymbol{q}(\hat{y}))J_{\boldsymbol q}(\hat{y})J_{\boldsymbol q}(\hat{x}),  &&\widetilde{M}_{2}(\hat{x},\hat{y})={\boldsymbol m}_2(\boldsymbol{q}(\hat{x}),\boldsymbol{q}(\hat{y}))J_{\boldsymbol q}(\hat{y})J_{\boldsymbol q}(\hat{x}).
\end{align*}

As we mentioned before, due to the strong singularities in $N$ and $H$, it is difficult to achieve high order accuracy by 
the direct discretization of $N$ and $H$. As a regularization technique, the following two theorems describe the Galerkin  approach by transforming the singularities of $N$ and $H$ to the test functions. 
 
\begin{theorem}\label{ThN}
For any smooth scalar function $\varphi(\hat{x})$ on $\mathbb{S}^2$, we have
\begin{align}\label{JqNg}
\big(J_{\boldsymbol q} (N\boldsymbol{g})\circ\boldsymbol{q},\varphi\big)=\big(\mathcal{S}^\mathfrak{s}\boldsymbol{G},[{\bf D}_{\mathbb S^2}\boldsymbol{q}]{\bf curl}_{\mathbb{S}^2}\varphi\big),
\end{align}
where $(\cdot,\cdot)$ is the $L^2$ inner product on ${\mathbb S^2}$. 
\end{theorem}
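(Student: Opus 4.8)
The plan is to turn the claimed identity into a surface integration by parts on $\Gamma_D$ — precisely the Stokes-type identity already used in the proof of Theorem \ref{unique1} — and then pull everything back to $\mathbb{S}^2$ using the change of variables $x=\bq(\hat x)$ together with the formula \eqref{vectorcurl}. First I would unwind both inner products. Set $\boldsymbol{A}\bg(x):=\int_{\Gamma_D}\Phi(x,y;\ks)\bg(y)\,\mathrm{d}s(y)$, so that, by \eqref{singlelayer} and the definitions of the boundary operators, $N\bg=2\,\nu\cdot{\bf curl}\,\boldsymbol{A}\bg$ on $\Gamma_D$ and $S^{\mathfrak s}\bg=2\boldsymbol{A}\bg$, hence $\mathcal{S}^{\mathfrak s}\boldsymbol{G}=(S^{\mathfrak s}\bg)\circ\bq=2(\boldsymbol{A}\bg)\circ\bq$. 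Note that $\nu\cdot{\bf curl}\,\boldsymbol{A}\bg$ is continuous across $\Gamma_D$: dotting the jump relation \eqref{jumprel} with $\nu$ kills the term $\mp\frac12\nu\times\bg$ since $\nu\cdot(\nu\times\bg)=0$, so $N\bg$ is well defined. Writing $\tilde\varphi:=\varphi\circ\bq^{-1}$ and using $\mathrm{d}s(x)=J_{\bq}(\hat x)\,\mathrm{d}s(\hat x)$, the left-hand side of \eqref{JqNg} equals $\int_{\Gamma_D}(N\bg)(x)\,\tilde\varphi(x)\,\mathrm{d}s(x)$.

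Next I would apply the identity on Page 204 of \cite{DR-book2} (the one invoked in the proof of Theorem \ref{unique1}), $\int_{\Gamma_D}\tilde\varphi\,(\nu\cdot{\bf curl}\,\boldsymbol{A}\bg)\,\mathrm{d}s=\int_{\Gamma_D}\mathbf{Grad}\,\tilde\varphi\cdot(\nu\times\boldsymbol{A}\bg)\,\mathrm{d}s$, and then rearrange the integrand with the scalar triple product:
\[
\mathbf{Grad}\,\tilde\varphi\cdot(\nu\times\boldsymbol{A}\bg)=(\mathbf{Grad}\,\tilde\varphi\times\nu)\cdot\boldsymbol{A}\bg={\bf curl}_{\Gamma_D}\tilde\varphi\cdot\boldsymbol{A}\bg,
\]
where I use ${\bf curl}_{\Gamma_D}\tilde\varphi=\mathbf{Grad}\,\tilde\varphi\times\nu$, a convention consistent with \eqref{vectorcurl} (one checks in the right-handed orthonormal frame $(\hat x,\boldsymbol{e}_\theta,\boldsymbol{e}_\varphi)$ that both sides of \eqref{vectorcurl} equal $J_{\bq}^{-1}(b\,\boldsymbol{t}_1-a\,\boldsymbol{t}_2)$ with $a,b$ the $\theta,\varphi$-components of the surface gradient; alternatively this is in \cite{Louer2014}). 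Multiplying by $2$ and using $S^{\mathfrak s}\bg=2\boldsymbol{A}\bg$ gives
\[
\int_{\Gamma_D}(N\bg)\,\tilde\varphi\,\mathrm{d}s=2\int_{\Gamma_D}{\bf curl}_{\Gamma_D}\tilde\varphi\cdot\boldsymbol{A}\bg\,\mathrm{d}s=\int_{\Gamma_D}{\bf curl}_{\Gamma_D}\tilde\varphi\cdot(S^{\mathfrak s}\bg)\,\mathrm{d}s.
\]

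Finally I would change variables back to $\mathbb{S}^2$. By \eqref{vectorcurl}, $({\bf curl}_{\Gamma_D}\tilde\varphi)(\bq(\hat x))\,J_{\bq}(\hat x)=[{\bf D}_{\mathbb{S}^2}\bq(\hat x)]{\bf curl}_{\mathbb{S}^2}(\tilde\varphi\circ\bq)(\hat x)=[{\bf D}_{\mathbb{S}^2}\bq(\hat x)]{\bf curl}_{\mathbb{S}^2}\varphi(\hat x)$, since $\tilde\varphi\circ\bq=\varphi$; thus the Jacobian produced by $\mathrm{d}s(x)=J_{\bq}\,\mathrm{d}s(\hat x)$ cancels the $1/J_{\bq}$ built into \eqref{vectorcurl}. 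Together with $(S^{\mathfrak s}\bg)\circ\bq=\mathcal{S}^{\mathfrak s}\boldsymbol{G}$ this yields
\[
\int_{\Gamma_D}{\bf curl}_{\Gamma_D}\tilde\varphi\cdot(S^{\mathfrak s}\bg)\,\mathrm{d}s=\int_{\mathbb{S}^2}\big([{\bf D}_{\mathbb{S}^2}\bq]{\bf curl}_{\mathbb{S}^2}\varphi\big)\cdot\mathcal{S}^{\mathfrak s}\boldsymbol{G}\,\mathrm{d}s=\big(\mathcal{S}^{\mathfrak s}\boldsymbol{G},[{\bf D}_{\mathbb{S}^2}\bq]{\bf curl}_{\mathbb{S}^2}\varphi\big),
\]
which is exactly \eqref{JqNg}. (If the inner product is taken sesquilinear, the conjugate passes harmlessly through the real operators $[{\bf D}_{\mathbb{S}^2}\bq]$ and ${\bf curl}_{\mathbb{S}^2}$ and the same computation goes through.)

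\textbf{Expected difficulty.} The two changes of variables and the vector-calculus manipulations are routine; the one point requiring care — and in my view the main obstacle — is legitimizing the surface integration-by-parts for the single-layer potential $\boldsymbol{A}\bg$, i.e. that $\boldsymbol{A}\bg|_{\Gamma_D}$ and $\nu\cdot{\bf curl}\,\boldsymbol{A}\bg$ are regular enough for the Stokes-type identity of \cite{DR-book2} to apply. This is handled exactly as in Theorem \ref{unique1}: $\Gamma_D$ is analytic, $\boldsymbol{A}\bg$ is continuous up to $\Gamma_D$, and by \eqref{jumprel} its normal-of-curl trace is continuous and well defined (taking $\bg$, and hence $\boldsymbol{A}\bg$, smooth, as is the case for the trial functions in the Galerkin scheme). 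The only other bookkeeping item is recording the orientation convention for ${\bf curl}_{\Gamma_D}$ so that ${\bf curl}_{\Gamma_D}\varphi=\mathbf{Grad}\,\varphi\times\nu$ is consistent with \eqref{vectorcurl}; with the right-handed frame $(\hat x,\boldsymbol{e}_\theta,\boldsymbol{e}_\varphi)$ used in the paper this holds automatically.
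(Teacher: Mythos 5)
Your argument is correct and follows essentially the same route as the paper: both proofs integrate by parts on $\Gamma_D$ to move the surface curl onto the test function and then pull everything back to $\mathbb{S}^2$ via \eqref{vectorcurl}. The only cosmetic difference is that the paper invokes N\'ed\'elec's identity $N\bg={\rm curl}_{\Gamma_D}\big((\nu\times S^{\mathfrak s}\bg)\times\nu\big)$ together with the Stokes duality \eqref{intebyparts}, whereas you use the equivalent integrated identity from Colton--Kress plus a scalar-triple-product rearrangement.
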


\begin{proof}	
Using \cite[Theorem 2.5.20]{Nedelec2001}, we have
\begin{align} \label{Ng}
N\boldsymbol{g}(x)={\rm curl}_{\Gamma_D}\big( (\nu(x)\times (S^{\mathfrak s}\boldsymbol{g})(x))\times\nu(x)\big),\quad x\in\Gamma_D.
\end{align}
It follows from Stokes' theorem that 
\begin{align}\label{intebyparts}
\int_{\Gamma_D}\boldsymbol{w}\cdot{\bf curl}_{\Gamma_D}u\,\mathrm{d}s=\int_{\Gamma_D}u~{\rm curl}_{\Gamma_D} \boldsymbol{w}\,\mathrm{d}s,
\end{align}
for any differentiable scalar function $u$ and tangential vector function $\boldsymbol{w}$ on $\Gamma_{D}$.
Combining \eqref{vectorcurl} and \eqref{Ng}--\eqref{intebyparts} gives
\begin{align*}
\big(J_{\boldsymbol q}(N\boldsymbol{g})\circ\boldsymbol{q},\varphi\big) &=\int_{\mathbb{S}^2}J_{\boldsymbol q}(\hat{x})(N\boldsymbol{g})\circ\boldsymbol{q}(\hat{x})\overline{\varphi(\hat{x})}\,\mathrm{d}s(\hat{x})\\
&=\int_{\Gamma_D}(N\boldsymbol{g})(x)\overline{\varphi\circ\boldsymbol{q}^{-1}(x)}\,\mathrm{d}s(x)\\
&=\int_{\Gamma_D}\overline{\varphi\circ\boldsymbol{q}^{-1}(x)}~{\rm curl}_{\Gamma_D}\big((\nu(x)\times(S^{\mathfrak s}\boldsymbol{g})(x))\times\nu(x)\big)\,\mathrm{d}s(x)\\
&=\int_{\Gamma_D}\big((\nu(x)\times(S^{\mathfrak s}\boldsymbol{g})(x))\times\nu(x)\big)\cdot\overline{{\bf curl}_{\Gamma_D}\varphi\circ\boldsymbol{q}^{-1}(x)}\,\mathrm{d}s(x)\\
&=\int_{\Gamma_D}(S^{\mathfrak s}\boldsymbol{g})(x)\cdot\overline{{\bf curl}_{\Gamma_D}\varphi\circ\boldsymbol{q}^{-1}(x)}\,\mathrm{d}s(x)\\
&=\int_{\mathbb{S}^2}(\mathcal{S}^{\mathfrak s}\boldsymbol{G})(\hat{x})\cdot\overline{[{\bf D}_{\mathbb S^2}\boldsymbol{q}(\hat{x})]{\bf curl}_{\mathbb{S}^2}\varphi(\hat{x})}\,\mathrm{d}s(\hat{x}),
\end{align*}
which completes the proof.
\end{proof}

\begin{theorem}\label{ThH}
	For any smooth vector function $\boldsymbol\varphi(\hat{x})$ on $\mathbb{S}^2$, we have
	\begin{align}\label{JqHg}
		\big(J_{\boldsymbol q}(Hg)\circ\boldsymbol{q},\boldsymbol{\varphi}\big)=\big(\mathcal{S}^\mathfrak{p}G,[{\bf D}_{\mathbb{S}^2}\bq]^\top:{\bf curl}_{\mathbb{S}^2}\boldsymbol\varphi\big).
	\end{align}
\end{theorem}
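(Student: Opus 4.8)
The plan is to mirror the proof of Theorem \ref{ThN} almost verbatim, replacing the scalar single-layer potential by the vector one and swapping the roles of the surface scalar curl and the surface vector curl. First I would seek an analogue of the identity \eqref{Ng} for the operator $H$, i.e., a representation of $Hg = \nu\times\nabla(S^\mathfrak{p}g)$ as a surface scalar curl applied to a tangential field built from $S^\mathfrak{p}g$. Writing $u := (S^\mathfrak{p}g)(x)$ on $\Gamma_D$, one has $\nabla u = \mathbf{Grad}\,u + \nu\,\partial_\nu u$ on the surface, so $\nu\times\nabla u = \nu\times\mathbf{Grad}\,u$; and the surface scalar curl of a scalar field is (up to sign) exactly $\nu\times\mathbf{Grad}$, so $Hg$ should be expressible as $\mathrm{curl}_{\Gamma_D}$ (or $\mathbf{curl}_{\Gamma_D}$, whichever the sign convention dictates) of the scalar $S^\mathfrak{p}g$. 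The precise statement and sign will come from \cite[Theorem 2.5.20]{Nedelec2001} or its companion, just as \eqref{Ng} did; this is the one place where I must be careful about conventions.

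Next I would invoke Stokes' theorem in the form \eqref{intebyparts} to move the surface differential operator off the (singular) potential and onto the smooth test function $\boldsymbol\varphi$. Concretely, pairing $Hg$ against $\boldsymbol\varphi\circ\boldsymbol q^{-1}$ over $\Gamma_D$ and using the representation from the first step, the integration-by-parts identity transfers the surface curl to $\boldsymbol\varphi$, turning $\int_{\Gamma_D}\boldsymbol\varphi\cdot Hg\,\mathrm{d}s$ into $\int_{\Gamma_D}(S^\mathfrak{p}g)\,\overline{\mathrm{curl}_{\Gamma_D}\boldsymbol\varphi\circ\boldsymbol q^{-1}}\,\mathrm{d}s$ (with $S^\mathfrak{p}g$ now appearing against a scalar surface-curl of the vector test function, so only the weakly singular operator $S^\mathfrak{p}$ survives). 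Then I would push everything back to $\mathbb{S}^2$ via the change of variables $x=\boldsymbol q(\hat x)$, using $\int_{\Gamma_D}(\cdot)\,\mathrm{d}s(x)=\int_{\mathbb{S}^2}(\cdot)J_{\boldsymbol q}(\hat x)\,\mathrm{d}s(\hat x)$ to account for the Jacobian factor $J_{\boldsymbol q}$ on the left-hand side of \eqref{JqHg}, and the parametrization formula \eqref{scalarcurl} to rewrite $(\mathrm{curl}_{\Gamma_D}\boldsymbol\varphi)\circ\boldsymbol q$ as $-\frac{1}{J_{\boldsymbol q}}[{\bf D}_{\mathbb{S}^2}\boldsymbol q]^\top:{\bf curl}_{\mathbb{S}^2}(\boldsymbol\varphi\circ\boldsymbol q)$; the Jacobian from \eqref{scalarcurl} cancels against the one from the change of variables in $S^\mathfrak{p}$, leaving exactly the pairing $(\mathcal{S}^\mathfrak{p}G,[{\bf D}_{\mathbb{S}^2}\boldsymbol q]^\top:{\bf curl}_{\mathbb{S}^2}\boldsymbol\varphi)$ on $\mathbb{S}^2$.

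The main obstacle I anticipate is bookkeeping of signs and of the projection identity in the first step: for $N$ the clean formula \eqref{Ng} involves the awkward double cross product $(\nu\times S^\mathfrak{s}\boldsymbol g)\times\nu$, which is just the tangential projection of $S^\mathfrak{s}\boldsymbol g$ and collapses in the pairing; for $H$ the analogous manipulation should be simpler because $S^\mathfrak{p}g$ is already scalar, but I must verify that $\nu\times\nabla(S^\mathfrak{p}g)$ equals $\mathrm{curl}_{\Gamma_D}(S^\mathfrak{p}g)$ with the sign that makes \eqref{intebyparts} produce the scalar surface curl $\mathrm{curl}_{\Gamma_D}\boldsymbol\varphi$ (not $\mathbf{curl}_{\Gamma_D}\boldsymbol\varphi$) and that this matches the minus sign hidden in \eqref{scalarcurl}. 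Apart from this, every step is a direct transcription of the proof of Theorem \ref{ThN}, so once the representation and sign conventions are pinned down the computation is routine.
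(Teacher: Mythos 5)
Your proposal is correct and follows essentially the same route as the paper: the paper writes $(Hg)(x)=2\nu\times\mathbf{Grad}_{\Gamma_D}\int_{\Gamma_D}\Phi(\cdot,y;\kappa_{\mathfrak p})g\,\mathrm{d}s=-\mathbf{curl}_{\Gamma_D}(S^{\mathfrak p}g)$, applies Stokes' formula \eqref{intebyparts} to shift the curl onto $\boldsymbol\varphi$, and then uses the change of variables together with \eqref{scalarcurl}, whose built-in minus sign cancels the one in the representation of $Hg$ — exactly the sign bookkeeping you flagged. The only convention to pin down is that, since $S^{\mathfrak p}g$ is scalar, the representation necessarily uses the vector surface curl $\mathbf{curl}_{\Gamma_D}$, and the scalar curl $\mathrm{curl}_{\Gamma_D}$ then lands on the test function after integration by parts, as you correctly anticipated.
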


\begin{proof}
	For $x\in\Gamma_D$, since $\nabla w=\mathbf{Grad}_{\Gamma_D}w+\nu\partial_\nu w$, we have
	\begin{align} \label{Hg}
		(Hg)(x)=2\nu(x)\times\mathbf{Grad}_{\Gamma_D}\int_{\Gamma_D}\Phi(x,y;\kp) g(y)\,\mathrm{d}s(y)=-\mathbf{curl}_{\Gamma_D}(S^\mathfrak{p}g)(x).
	\end{align}
	Using \eqref{scalarcurl}, \eqref{Hg} and stokes' theorem \eqref{intebyparts}, 
	we obtain 
	\begin{align*}
		\big(J_{\boldsymbol q}(Hg)\circ\bq,\boldsymbol\varphi\big) &=\int_{\mathbb{S}^2}J_{\boldsymbol q}(\hat{x})[(Hg)\circ\boldsymbol{q}(\hat{x})]\cdot\overline{\boldsymbol\varphi(\hat{x})}\,\mathrm{d}s(\hat{x})\\
		&=\int_{\Gamma_D}(Hg)(x)\cdot\overline{\boldsymbol\varphi\circ\boldsymbol{q}^{-1}(x)}\,\mathrm{d}s(x)\\
		&=-\int_{\Gamma_D}\mathbf{curl}_{\Gamma_D}(S^\mathfrak{p}g)(x)\cdot\overline{\boldsymbol\varphi\circ\boldsymbol{q}^{-1}(x)}\,\mathrm{d}s(x)\\
		&=-\int_{\Gamma_D}(S^\mathfrak{p}g)(x)\overline{{\rm curl}_{\Gamma_D}\boldsymbol\varphi\circ\boldsymbol{q}^{-1}(x)}\,\mathrm{d}s(x)\\
		&=\int_{\mathbb{S}^2}(\mathcal{S}^{\mathfrak p}G)(\hat{x})\overline{[{\bf D}_{\mathbb{S}^2}\bq]^\top:{\bf curl}_{\mathbb{S}^2}\boldsymbol\varphi}\,\mathrm{d}s(\hat{x}),
	\end{align*}
	which completes the proof.
\end{proof}

\section{Numerical discretization}\label{SM}

Motivated by \cite{GG2004}, we propose a fully discrete Galerkin type method  with spectral accuracy. To approximate the scalar density functions on the unit sphere, we choose $(n+1)^2$-dimensional space of all spherical harmonics of degree less than or equal to $n$, denoted by
\begin{align*}
	X_n=\text{span}\{Y_{l,j}(\hat{x}): 0\leq l\leq n,~|j|\leq l\},
\end{align*}
where 
\begin{align*}
	Y_{l,j}(\hat{x})=Y_{l,j}(p(\theta,\varphi))=c_l^jP_l^{|j|}(\cos\theta)\mathrm{e}^{\mathrm{i}j\varphi}, \quad c_l^j=(-1)^{(j+|j|)/2}\sqrt{\frac{2l+1}{4\pi}\frac{(l-|j|)!}{(l+|j|)!}}
\end{align*}
for $l=0,1,2,\cdots, |j|\leq l$ form a complete orthonormal system in $L^2(\mathbb{S}^2)$, and $P_l^{|j|}$ denote the associated Legendre functions of degree $l$ with order $|j|$. Analogously to \cite{GH2007}, we introduce 
\begin{align*}
	\boldsymbol{X_n}=\text{span}\{\boldsymbol{Y}_{l,j,k}(\hat{x}):  \boldsymbol{Y}_{l,j,k}=Y_{l,j}\boldsymbol{e}_k,~0\leq l\leq n,~|j|\leq l,k=1,2,3\},
\end{align*}
where $\boldsymbol{e}_k$ denotes the $k$th Euclidean vector.

It follows from \cite{Louer2020} and \eqref{PGrad2} that the tangential gradient of the spherical harmonics is given by
\begin{align*}
	\mathbf{Grad}_{\mathbb{S}^2}Y_{l,j}(p(\theta,\varphi))=
	\begin{cases}
		c_l^j\Big(\frac{\partial P_l^{|j|}(\cos\theta)}{\partial\theta}\mathrm{e}^{\mathrm{i}j\varphi}\boldsymbol{e}_\theta\circ p+\mathrm{i}jP_l^{|j|}(\cos\theta)\mathrm{e}^{\mathrm{i}j\varphi}\frac{\boldsymbol{e}_\varphi\circ p}{\sin\theta}\Big),  &\sin\theta\not=0,\\
		\sqrt{\frac{2l+1}{4\pi}}\sqrt{l(l+1)}\Big(\frac{(\cos\theta)^l}{2}\boldsymbol{e}_\theta\circ p+\mathrm{i}j\frac{(\cos\theta)^{l+1}}{2}\boldsymbol{e}_\varphi\circ p\Big), &\sin\theta=0, |j|=1,\\
		(0,0,0)^\top, &\sin\theta=0, |j|\not=1,
	\end{cases}
\end{align*}
where 
\begin{align*}
	\frac{\partial P_l^{|j|}(\cos\theta)}{\partial\theta}=
	-\frac{(l+1)\cos\theta}{\sin\theta}P_l^{|j|}(\cos\theta)+\frac{l-|j|+1}{\sin\theta}P_{l+1}^{|j|}(\cos\theta), \quad \sin\theta\not=0.
\end{align*}

It is clear to note that $\mathbf{Grad}_{\mathbb{S}^2}Y_{l,j}$ is a tangential vector on $\mathbb{S}^2$ but may not be a tangential vector on the boundary $\Gamma_{D}$. To approximate the tangential vector density functions on the parametrized surface $\Gamma_D$, we choose the following ansatz space \cite{GH2008}:
\begin{align*}
	\underline{\mathbb{T}_n}=\text{span}\Big\{\boldsymbol{Z}_{l,j}^{(\tilde{k})}(\hat{x}): 1\leq l\leq n, ~|j|\leq l, ~\tilde{k}=1,2\Big\},
\end{align*}
where
\begin{align*}
	\boldsymbol{Z}_{l,j}^{(1)}(\hat{x}) &= \frac{1}{\sqrt{l(l+1)}}\mathcal{F}(\hat{x}) \mathbf{Grad}_{\mathbb{S}^2}Y_{l,j}(\hat{x}), \\
	\boldsymbol{Z}_{l,j}^{(2)}(\hat{x}) &=  \frac{1}{\sqrt{l(l+1)}} \mathcal{F}(\hat{x}) \hat{x}\times \mathbf{Grad}_{\mathbb{S}^2}Y_{l,j}(\hat{x}),
\end{align*}
and $\boldsymbol{Z}_{0,0}^{(1)}=\boldsymbol{Z}_{0,0}^{(2)}=0$. Here $\mathcal{F}(\hat{x}) $ is an orthogonal transformation that transforms tangential functions on  $\mathbb{S}^2$ to tangential functions on $\Gamma_D$. More explicitly, for a given vector $\mathbf{y}\in \mathbb{C}^3$, $\mathcal{F}(\hat{x})$ is given by
\begin{eqnarray*}
	\mathcal{F}(\hat{x}) \mathbf{y}= \cos\psi \mathbf{y} + [\hat{x}\times \nu \circ \bq] \times \mathbf{y}+\frac{1}{1+\cos\psi} [\hat{x}\times \nu \circ \bq]^{\top} \mathbf{y}[\hat{x}\times \nu \circ \bq],
\end{eqnarray*}
where $\psi$ is the angle between $\hat{x}$ and $\nu \circ \bq(\hat{x})$. More properties on $\mathcal{F}(\hat{x})$ can be found in \cite{GH2008}. 

Let $-1<z_1<z_2<\cdots<z_{n+1}<1$ denote the zeros of the Legendre polynomial $P_{n+1}$, and consider the Gaussian product rule 
for the numerical integration of a continuous function over $\mathbb{S}^2$: 
\begin{align*}
	\int_{\mathbb{S}^2} f(\hat{y})\,\mathrm{d}s(\hat{y})\approx\sum_{r=0}^{2n+1}\sum_{s=1}^{n+1}\mu_r\nu_sf(p(\theta_{s},\varphi_{r})):=Q_n(f), 
\end{align*}
where the weights $\mu_r$ and $\nu_s$ are given by 
$$
\mu_r=\frac{\pi}{n+1}, \quad\nu_{s}=\frac{2(1-z_{s}^2)}{[(n+1)P_{n}(z_{s})]^2}, 
$$  
and the quadrature knots $\theta_s$ and $\varphi_r$ are 
$$
\theta_{s}=\arccos z_{s}, \qquad \varphi_{r}=\frac{r\pi}{n+1}.
$$ 

Let $C(\mathbb{S}^2)$ be the space of continuous functions on  $\mathbb{S}^2$ and $ \boldsymbol{C}(\mathbb{S}^2)$ the vector function space on $\mathbb{S}^2$ with each component in $C(\mathbb{S}^2)$. Define the discrete orthogonal projection operators $\mathcal{L}_n^{\mathfrak{p}}: C(\mathbb{S}^2)\to X_n$ and $\mathcal{L}_n^{\mathfrak{s}}: \boldsymbol{C}(\mathbb{S}^2)\to \boldsymbol{X_n}$ by
\begin{align*}
	\mathcal{L}_n^{\mathfrak{p}}\psi&=\sum_{l=0}^n\sum_{|j|\leq l}(\psi,Y_{l,j})_nY_{l,j}, \quad \psi\in C(\mathbb{S}^2),\\
	\mathcal{L}_n^{\mathfrak{s}}\boldsymbol{\Psi}&=\sum_{l=0}^n\sum_{|j|\leq l}\sum_{k=1}^3(\boldsymbol{\Psi},\boldsymbol{Y}_{l,j,k})_n\boldsymbol{Y}_{l,j,k}, \quad \Psi\in \boldsymbol{C}(\mathbb{S}^2),	
\end{align*}
where we have set $(\psi,Y_{l,j})_n=Q_n(\psi \overline{Y}_{l,j})$ and
the discrete inner product on $\mathbb{S}^2$ for two vector functions $\boldsymbol{G}$ and $\boldsymbol{Z}$ is denoted by $(\boldsymbol{G},\boldsymbol{Z})_{ n}=Q_n( \boldsymbol{Z}^\top\overline{\boldsymbol{G}})$.

Now we describe the approximation in details for \eqref{JqNg} and \eqref{JqHg}. Following \cite{GG2004,GH2008}, we split 
the kernels in \eqref{operatorsB} into a weakly singular part and an analytic part. In order to accurately integrate the weakly singular part, an orthogonal transformation is introduced on $\mathbb{S}^2$ and the singularity is transferred to the north pole $\hat{n}=(0,0,1)$. In particular, if $\hat{x}=p(\theta,\varphi)$, the orthogonal transformation is defined by $T_{\hat{x}}:=D_P(\varphi)D_Q(\theta)D_P(-\varphi)$, i.e., $T_{\hat{x}}\hat{x}=\hat{n}$, where 
$$
D_P(\psi)=
\left[ \begin{array}{ccc}
\cos\psi & -\sin\psi & 0\\
\sin\psi & \cos\psi  & 0\\
0     &     0     & 1
\end{array} 
\right ], \quad
D_Q(\psi)=
\left[ \begin{array}{ccc}
\cos\psi &   0   & -\sin\psi \\
0     &   1   &    0      \\
\sin\psi &   0   & \cos\psi  
\end{array} 
\right ].
$$
Define the linear and bilinear transformations by
\begin{align*}
	T_{\hat{x}}\Psi(\hat{z}):=\Psi(T_{\hat{x}}^{-1}\hat{z}),\quad T_{\hat{x}}\Psi(\hat{z}_1,\hat{z}_2):=\Psi(T_{\hat{x}}^{-1}\hat{z}_1,T_{\hat{x}}^{-1}\hat{z}_2).
\end{align*}
Using the fact that 
\begin{eqnarray*}
 |\hat{x}-\hat{y}|=|T_{\hat{x}}^{-1}(\hat{n}-\hat{z})|=|\hat{n}-\hat{z}|,
\end{eqnarray*}
we can write $\mathcal{S}^\sigma$ in \eqref{operatorsB} as  
\begin{align*}
(\mathcal{S}^\sigma \widetilde{G}_\sigma)(\hat{x})=\int_{\mathbb{S}^2}\Big(\frac{1}{|\hat{n}-\hat{z}|}T_{\hat{x}}\widetilde{S}^{\sigma}_{1}(\hat{n},\hat{z})+T_{\hat{x}}\widetilde{S}^{\sigma}_{2}(\hat{n},\hat{z})\Big)
T_{\hat{x}}\widetilde{G}_\sigma(\hat{z})\,\mathrm{d}s(\hat{z})
\end{align*}
for $\sigma=\mathfrak{p,s}$, where $\widetilde{G}_{\mathfrak{p}}=g_1\circ \bq$ and $\widetilde{G}_{\mathfrak{s}}=\boldsymbol{g}_2\circ\bq$.
Then, by using
$$
\int_{\mathbb{S}^2}\frac{1}{|\hat{x}-\hat{y}|}Y_{l,j}(\hat{y})\,\mathrm{d}s(\hat{y})=\frac{4\pi}{2l+1}Y_{l,j}(\hat{x}),\quad \hat{x}\in\mathbb{S}^2
$$ 
and the addition theorem
$$
\sum_{j=-l}^{l}Y_{l,j}(\hat{x})\overline{Y_{l,j}(\hat{y})}=\frac{2l+1}{4\pi}P_l(\cos\bar{\theta}),
$$
where $\bar{\theta}$ denotes the angle between $\hat{x}$ and $\hat{y}$, the approximation $\mathcal{S}^\sigma_{n'}$ for the operators $\mathcal{S}^\sigma$ can be represented as 
\begin{align*}
(\mathcal{S}^\sigma_{n'}\widetilde{G}_\sigma)(\hat{x}):=&\int_{\mathbb{S}^2}\frac{1}{|\hat{n}-\hat{z}|}\mathcal{L}_{n'}^{\sigma}\bigg\{T_{\hat{x}}\widetilde{S}^{\sigma}_{1}(\hat{n},\hat{z})T_{\hat{x}}\widetilde{G}_\sigma(\hat{z})\bigg\}\,\mathrm{d}s(\hat{z})+\int_{\mathbb{S}^2}\mathcal{L}_{n'}^{\sigma}\bigg\{T_{\hat{x}}\widetilde{S}^{\sigma}_{2}(\hat{n},\hat{z})T_{\hat{x}}\widetilde{G}_\sigma(\hat{z})\bigg\}\,\mathrm{d}s(\hat{z})\\
=&\sum_{r'=0}^{2n'+1}\sum_{s'=1}^{n'}\xi_{r'}\eta_{s'}\Big[\alpha_{s'}^{n'}T_{\hat{x}}\widetilde{S}^{\sigma}_{1}(\hat{n},\hat{y}_{r's'})+T_{\hat{x}}\widetilde{S}^{\sigma}_{2}(\hat{n},\hat{y}_{r's'})\Big]
T_{\hat{x}}\widetilde{G}_\sigma(\hat{y}_{r's'}).
\end{align*} 
Here $\alpha_{s'}^{n'}:=\sum_{l=0}^{n'}P_l(\hat{n}\cdot\hat{y}_{r's'})$, $\xi_{r'}=\mu_{r'}$ and $\eta_{s'}=\nu_{s'}$.

In view of Theorems \ref{ThN} and \ref{ThH}, the Galerkin method for \eqref{boundaryoperator} seeks to approximate solutions $\widetilde{G}^{n}_{\mathfrak{p}}\in X_n$ and $\widetilde{G}^{n}_{\mathfrak{s}}\in \underline{\mathbb{T}_n}$, which can be written as
\begin{align*}
	\widetilde{G}^{n}_{\mathfrak{p}}(\hat{x})=\sum_{l=0}^n\sum_{j=-l}^{l}w_{lj}Y_{l,j}(\hat{x}), \quad \widetilde{G}^{n}_{\mathfrak{s}}(\hat{x})=\sum_{l=0}^n\sum_{j=-l}^{l}\sum_{\tilde{k}=1}^{2}W_{lj\tilde{k}}\boldsymbol{Z}_{l,j}^{(\tilde{k})}(\hat{x}),
\end{align*}
and satisfy 
\begin{align}\label{boundaryIE2_Galerkin}
	\begin{cases}
&-(J_{\boldsymbol q}\widetilde{G}^{n}_{\mathfrak{p}},Y_{l',j'})_{n+1}+(\mathcal{K}_{n'}\widetilde{G}^{n}_{\mathfrak{p}},Y_{l',j'})_{n+1}\\
&\hspace{2cm}+\big(\mathcal{S}_{n'}^{\mathfrak{s}}\widetilde{G}^{n}_{\mathfrak{s}},[{\bf D}_{\mathbb S^2}\boldsymbol{q}]{\bf curl}_{\mathbb{S}^2}Y_{l',j'}\big)_{n+1}	=	2(\tilde{f}_1 J_{\boldsymbol q},Y_{l',j'})_{n+1},\\ 
&\big(\mathcal{S}^{\mathfrak{p}}_{n'}\widetilde{G}^{n}_{\mathfrak{p}},[{\bf D}_{\mathbb{S}^2}\bq]^\top:{\bf curl}_{\mathbb{S}^2}\boldsymbol{Z}_{l',j'}^{(1)}\big)_{n+1}+(J_{\boldsymbol q}\widetilde{G}^{n}_{\mathfrak{s}},\boldsymbol{Z}_{l',j'}^{(1)})_{n+1}\\
&\hspace{2cm}+(\mathcal{M}_{n'}\widetilde{G}^{n}_{\mathfrak{s}},\boldsymbol{Z}_{l',j'}^{(1)})_{n+1} = 2(\boldsymbol{\tilde{f}}_2J_{\boldsymbol q},\boldsymbol{Z}_{l',j'}^{(1)})_{n+1},
\\
&\big(\mathcal{S}^{\mathfrak{p}}_{n'}\widetilde{G}^{n}_{\mathfrak{p}},[{\bf D}_{\mathbb{S}^2}{\boldsymbol q}]^\top:{\bf curl}_{\mathbb{S}^2}\boldsymbol{Z}_{l',j'}^{(2)}\big)_{n+1}+(J_{\boldsymbol q}\widetilde{G}^{n}_{\mathfrak{s}},\boldsymbol{Z}_{l',j'}^{(2)})_{n+1}\\
&\hspace{2cm}+(\mathcal{M}_{n'}\widetilde{G}^{n}_{\mathfrak{s}},\boldsymbol{Z}_{l',j'}^{(2)})_{n+1} = 2(\boldsymbol{\tilde{f}}_2J_{\boldsymbol q},\boldsymbol{Z}_{l',j'}^{(2)})_{n+1}
\end{cases}
\end{align}
for $l'=0,1,\cdots,n, ~|j'|\leq l'$, where   $\tilde{f}_1=f_1\circ\boldsymbol{q}$, $\boldsymbol{\tilde{f}}_2=\boldsymbol{f}_2\circ\boldsymbol{q}$ and $n' = an+1$ with $a>1$.

To assemble the matrix, we denote the corresponding matrix elements in \eqref{boundaryIE2_Galerkin} by 
\begin{align*}
\mathbf{N}^{\tilde{k}}_{l'j',lj}&:=\big(\mathcal{S}_{n'}^{\mathfrak{s}}\boldsymbol{Z}^{(\tilde{k})}_{l,j},[{\bf D}_{\mathbb S^2}\boldsymbol{q}]{\bf curl}_{\mathbb{S}^2}Y_{l',j'}\big)_{n+1},\quad \tilde{k}=1,2, \\
\mathbf{H}^{k'}_{l'j',lj}&:=\big(\mathcal{S}^{\mathfrak{p}}_{n'}Y_{l,j},[{\bf D}_{\mathbb{S}^2}\bq]^\top:{\bf curl}_{\mathbb{S}^2}\boldsymbol{Z}_{l',j'}^{(k')}\big)_{n+1}, \quad k'=1,2, \\
\mathbf{U}^{k',\tilde{k}}_{l'j',lj}&:=\big(J_{\boldsymbol q}\boldsymbol{Z}^{(\tilde{k})}_{l,j},\boldsymbol{Z}_{l',j'}^{(k')}\big)_{n+1},\quad
\mathbf{I}_{l'j',lj}:=\big(J_{\boldsymbol q}Y_{l,j},Y_{l',j'}\big)_{n+1}, \quad k',\tilde{k}=1,2, \\
\mathbf{M}_{l'j',lj}^{k',\tilde{k}}&:=(\mathcal{M}_{n'}\boldsymbol{Z}_{lj}^{\tilde{(k)}},\boldsymbol{Z}_{l'j'}^{(k')})_{n+1}, \quad \mathbf{K}_{l'j',lj}:=(\mathcal{K}_{n'}Y_{l,j},Y_{l',j'})_{n+1}, ~\quad k',\tilde{k}=1,2.
\end{align*}
Let us also introduce the following notations
\begin{align}\label{points}
\hat{x}_{rs}=p(\theta_s,\varphi_r), \quad \hat{y}_{r's'}=p(\varTheta_{s'},\varPhi_{r'}), \quad \hat{y}_{rs}^{r's'}=T^{-1}_{p(\theta_s,\varphi_r)}p(\varTheta_{s'},\varPhi_{r'}):=p(\varLambda_{rs}^{r's'},\varXi_{rs}^{r's'})
\end{align}
with the quadrature knots $\theta_s$, $\varphi_r$, $\varTheta_{s'}$, $\varPhi_{r'}$. Note that $\sin\theta_s\not=0$ and $\sin\varTheta_{s'}\not=0$ as taking Gaussian quadrature nodes. Since the singularity is transferred to the north pole, we need the representation of rotated spherical harmonics. It follows from standard calculations\cite{GH2007} that 
\begin{align}
Y_{l,j}(\hat{y}_{rs}^{r's'})&=\sum_{\tilde{j}=- l}^{l}F_{sl\tilde{j}j}\mathrm{e}^{\mathrm{i}(j-\tilde{j})\varphi_r}Y_{l,\tilde{j}}(\hat{y}_{r's'}),\label{Y_lj1}\\
\boldsymbol{Z}^{(\tilde{k})}_{l,j}(\hat{y}_{rs}^{r's'})&=\mathcal{F}(\hat{y}_{rs}^{r's'})\sum_{\tilde{j}=- l}^{l}F_{sl\tilde{j}j}\mathrm{e}^{\mathrm{i}(j-\tilde{j})\varphi_r}\sum_{d=1}^{2}\alpha_{l,j}^{(\tilde{k},d)}(\varTheta_{s'})\mathrm{e}^{\mathrm{i}\tilde{j}\varPhi_{r'}}T^{-1}_{\hat{x}_{rs}}\boldsymbol{v}^{(d)}(\varTheta_{s'},\varPhi_{r'}),\label{Z_lj1}
\end{align}
where
\begin{align*}
	F_{sl\tilde{j}j}&=\mathrm{e}^{\mathrm{i}(j-\tilde{j})\pi/2}\sum_{m=-l}^{l}d_{\tilde{j}m}^{(l)}(\pi/2)d_{jm}^{(l)}(\pi/2)\mathrm{e}^{\mathrm{i}m\theta_s}, \\
	\boldsymbol{v}^{(1)}(\theta,\phi) &= (\cos \theta\cos\phi,\cos\theta\sin\phi,-\sin\theta)^T, \\
	\boldsymbol{v}^{(2)}(\theta,\phi) &= (-\cos\phi,\cos\theta,0)^T,\\
	\alpha_{l,j}^{(1,1)}(\theta) &= \alpha_{l,j}^{(2,2)}(\theta) = \frac{1}{\sqrt{l(l+1)}}c_l^j\frac{\partial P_l^{|j|}(\cos\theta)}{\partial \theta},\\
	\alpha_{l,j}^{(1,2)}(\theta) &= \alpha_{l,j}^{(2,1)}(\theta) = \frac{1}{\sqrt{l(l+1)}}c_l^j\frac{\mathrm{i}j}{\sin \theta}  P_l^{|j|}(\cos\theta). 
\end{align*}
Here
$$
d_{jm}^{(l)}(\pi/2)=2^j\sqrt{\frac{(l+j)!(l-j)!}{(l+m)!(l-m)!}}\mathcal{P}_{l+j}^{(m-j,-m-j)}(0),
$$
and $\mathcal{P}_{\bar{n}}^{(\alpha,\beta)}$ is the normalized Jacobi polynomial given by 
$$
\mathcal{P}_{\bar{n}}^{(\alpha,\beta)}(0)=2^{-\bar{n}}\sum_{\bar{t}=0}^{\bar{n}}(-1)^{\bar{t}}\left(
\begin{array}{c}
\bar{n}+\alpha\\
\bar{n}-\bar{t}
\end{array}\right)
\left(
\begin{array}{c}
\bar{n}+\beta\\
\bar{t}
\end{array}\right), \quad \alpha\geq0, \quad \beta\geq0.
$$
If $m-j$ or $-m-j$ is negative, then the following symmetry relation can be used to compute $d_{jm}^{(l)}(\pi/2)$:  
$$
d_{jm}^{(l)}(\varphi)=(-1)^{j-m}d_{mj}^{(l)}(\varphi)=d_{-m-j}^{(l)}(\varphi)=d_{mj}^{(l)}(-\varphi).
$$

Combining \eqref{points}--\eqref{Z_lj1} and \eqref{curls2},  we find that the element $\mathbf{N}^{\tilde{k}}_{l'j',lj}$ can be evaluated by
\begin{align*}
	\mathbf{N}^{\tilde{k}}_{l'j',lj}&=\sum_{r=0}^{2n+3}\sum_{s=1}^{n+2}\mu_r\nu_s\sum_{r'=0}^{2n'+1}\sum_{s'=1}^{n'+1}\xi_{r'}\eta_{s'}\Big[\alpha_{s'}^{n'}\widetilde{S}^{\mathfrak{s}}_{1}(\hat{x}_{rs},\hat{y}_{rs}^{r's'})+\widetilde{S}^{\mathfrak{s}}_{2}(\hat{x}_{rs},\hat{y}_{rs}^{r's'})\Big]\\
	&\quad\sum_{\tilde{j}=- l}^{l}F_{sl\tilde{j}j}\mathrm{e}^{\mathrm{i}(j-\tilde{j})\varphi_r}\sum_{d=1}^{2}\alpha_{l,j}^{(\tilde{k},d)}(\varTheta_{s'})\mathrm{e}^{\mathrm{i}\tilde{j}\varPhi_{r'}}\Big(\mathcal{F}(\hat{y}_{rs}^{r's'})T^{-1}_{\hat{x}_{rs}}\boldsymbol{v}^{(d)}(\varTheta_{s'},\varPhi_{r'})\Big) \\
	&\quad\cdot\Big(-c_{l'}^{j'}\frac{\partial P_{l'}^{|j'|}(\cos\theta_s)}{\partial\theta_s}\me^{-\mi j'\varphi_r}\boldsymbol{t}_2(\hat{x}_{rs})-\mi j'c_{l'}^{j'}P_{l'}^{|j'|}(\cos\theta_s)\me^{-\mi j'\varphi_r}\frac{1}{\sin\theta_s}\boldsymbol{t}_1(\hat{x}_{rs})\Big)\\
\end{align*}
for $k'=1,2$, and the element $\mathbf{H}^{\tilde{k}}_{l'j',lj}$ is given by
\begin{align*}
\mathbf{H}^{k'}_{l'j',lj}&=\sum_{r=0}^{2n+3}\sum_{s=1}^{n+2}\mu_r\nu_s\sum_{r'=0}^{2n'+1}\sum_{s'=1}^{n'+1}\xi_{r'}\eta_{s'}\Big[\alpha_{s'}^{n'}\widetilde{S}^{\mathfrak{p}}_{1}(\hat{x}_{rs},\hat{y}_{rs}^{r's'})+\widetilde{S}^{\mathfrak{p}}_{2}(\hat{x}_{rs},\hat{y}_{rs}^{r's'})\Big]\\
&\quad\sum_{\tilde{j}=- l}^{l}F_{sl\tilde{j}j}\mathrm{e}^{\mathrm{i}(j-\tilde{j})\varphi_r}c_l^{\tilde{j}}P_l^{|\tilde{j}|}(\cos\varTheta_{s'})\mathrm{e}^{\mathrm{i}j\varPhi_{r'}}
\\
&\quad\Big(\boldsymbol{t}_1(\hat{x}_{rs})\cdot\Big[\frac{1}{\sin\theta_{s}}\frac{\partial\overline{Z}^{(k')}_{l',j'}\circ p(\theta_{s},\varphi_{r})}{\partial\varphi_r}\Big]-\boldsymbol{t}_2(\hat{x}_{rs})\cdot\frac{\partial\overline{Z}^{(k')}_{l',j'}\circ p(\theta_{s},\varphi_{r})}{\partial\theta_{s}}\Big)
\end{align*}
for $\tilde{k}=1,2$. It can be seen that the direct computation for each element needs $\mathcal{O}(n^4)$ computational cost, which leads to $\mathcal{O}(n^8)$ total computational complexity since there are $\mathcal{O}(n^4)$ matrix elements. To accelerate the evaluation, we take the idea of \cite{GH2008, GS2002} and make the following decomposition:
\begin{align*}
&E^{1,d}_{srs'\tilde{j}}=-\sum_{r'=0}^{2n'+1}\xi_{r'}\mathrm{e}^{\mathrm{i}\tilde{j}\varPhi_{r'}}\Big[\alpha_{s'}^{n'}\widetilde{S}^{\mathfrak{p}}_{1}(\hat{x}_{rs},\hat{y}_{rs}^{r's'})+\widetilde{S}^{\mathfrak{p}}_{2}(\hat{x}_{rs},\hat{y}_{rs}^{r's'})\Big]\Big(\mathcal{F}(\hat{y}_{rs}^{r's'})T^{-1}_{\hat{x}_{rs}}\boldsymbol{v}^{(d)}(\varTheta_{s'},\varPhi_{r'})\Big)\cdot\boldsymbol{t}_2(\hat{x}_{rs}), \\
&E^{2,d}_{srs'\tilde{j}}=-\sum_{r'=0}^{2n'+1}\xi_{r'}\mathrm{e}^{\mathrm{i}\tilde{j}\varPhi_{r'}}\Big[\alpha_{s'}^{n'}\widetilde{S}^{\mathfrak{p}}_{1}(\hat{x}_{rs},\hat{y}_{rs}^{r's'})+\widetilde{S}^{\mathfrak{p}}_{2}(\hat{x}_{rs},\hat{y}_{rs}^{r's'})\Big]\Big(\mathcal{F}(\hat{y}_{rs}^{r's'})T^{-1}_{\hat{x}_{rs}}\boldsymbol{v}^{(d)}(\varTheta_{s'},\varPhi_{r'})\Big)\cdot\boldsymbol{t}_1(\hat{x}_{rs}), \\
&D^{\tilde{k},h}_{srl\tilde{j}}=\sum_{s'=1}^{n'+1}\sum_{d=1}^{2}\eta_{s'}\alpha^{(\tilde{k},d)}_{l,\tilde{j}}(\varTheta_{s'})E^{h,d}_{srs'\tilde{j}},\quad h=1,2,\\
&C^{\tilde{k},h}_{srlj}=\sum_{|\tilde{j}|\leq l} F_{sl\tilde{j}j}\mathrm{e}^{\mathrm{i}(j-\tilde{j})\varphi_r}D^{\tilde{k},h}_{srl\tilde{j}},\quad
B^{\tilde{k},h}_{sj'lj}=\sum_{r=0}^{2n+3}\mu_r\mathrm{e}^{-\mathrm{i}j'\varphi_r}C^{\tilde{k},h}_{srlj},\quad h=1,2,
\end{align*}
which lead to 
\begin{align*}
\mathbf{N}^{\tilde{k}}_{l'j',lj}=\sum_{s=1}^{n+2}\nu_s\Big(c_{l'}^{j'}\frac{\partial P_{l'}^{|j'|}(\cos\theta_s)}{\partial\theta_s}B^{\tilde{k},1}_{sj'lj}+\mi j'c_{l'}^{j'}P_{l'}^{|j'|}(\cos\theta_{s})\frac{1}{\sin\theta_s}B^{\tilde{k},2}_{sj'lj}\Big).
\end{align*}
Since each step in the decomposition only requires $\mathcal{O}(n)$ amount of work, there is $\mathcal{O}(n^5)$ computational cost for $\mathcal{O}(n^4)$ matrix elements.  It is a huge cost saving compared to the evaluation without decomposition. 

Similarly, we may make the following decomposition for $\mathbf{H}^{k'}_{l'j',lj}$: 
\begin{align*}
&E^{1,d}_{srs'\tilde{j}}=\sum_{r'=0}^{2n'+1}\xi_{r'}\mathrm{e}^{\mathrm{i}\tilde{j}\varPhi_{r'}}\Big[\alpha_{s'}^{n'}\widetilde{S}^{\mathfrak{p}}_{1}(\hat{x}_{rs},\hat{y}_{rs}^{r's'})+\widetilde{S}^{\mathfrak{p}}_{2}(\hat{x}_{rs},\hat{y}_{rs}^{r's'})\Big]\Big(\mathcal{F}(\hat{x}_{rs})\boldsymbol{v}^{(d)}(\theta_{s},\varphi_{r})\Big)\cdot\boldsymbol{t}_1(\hat{x}_{rs})/\sin\theta_s, \\
&E^{2,d}_{srs'\tilde{j}}=\sum_{r'=0}^{2n'+1}\xi_{r'}\mathrm{e}^{\mathrm{i}\tilde{j}\varPhi_{r'}}\Big[\alpha_{s'}^{n'}\widetilde{S}^{\mathfrak{p}}_{1}(\hat{x}_{rs},\hat{y}_{rs}^{r's'})+\widetilde{S}^{\mathfrak{p}}_{2}(\hat{x}_{rs},\hat{y}_{rs}^{r's'})\Big]\frac{\partial\Big(\mathcal{F}(\hat{x}_{rs})\boldsymbol{v}^{(d)}(\theta_{s},\varphi_{r})\Big)}{\partial\varphi_r}\cdot\boldsymbol{t}_1(\hat{x}_{rs})/\sin\theta_s, \\
&E^{3,d}_{srs'\tilde{j}}=-\sum_{r'=0}^{2n'+1}\xi_{r'}\mathrm{e}^{\mathrm{i}\tilde{j}\varPhi_{r'}}\Big[\alpha_{s'}^{n'}\widetilde{S}^{\mathfrak{p}}_{1}(\hat{x}_{rs},\hat{y}_{rs}^{r's'})+\widetilde{S}^{\mathfrak{p}}_{2}(\hat{x}_{rs},\hat{y}_{rs}^{r's'})\Big]\Big(\mathcal{F}(\hat{x}_{rs})\boldsymbol{v}^{(d)}(\theta_{s},\varphi_{r})\Big)\cdot\boldsymbol{t}_2(\hat{x}_{rs}), \\
&E^{4,d}_{srs'\tilde{j}}=-\sum_{r'=0}^{2n'+1}\xi_{r'}\mathrm{e}^{\mathrm{i}\tilde{j}\varPhi_{r'}}\Big[\alpha_{s'}^{n'}\widetilde{S}^{\mathfrak{p}}_{1}(\hat{x}_{rs},\hat{y}_{rs}^{r's'})+\widetilde{S}^{\mathfrak{p}}_{2}(\hat{x}_{rs},\hat{y}_{rs}^{r's'})\Big]\frac{\partial\Big(\mathcal{F}(\hat{x}_{rs})\boldsymbol{v}^{(d)}(\theta_{s},\varphi_{r})\Big)}{\partial\theta_s}\cdot\boldsymbol{t}_2(\hat{x}_{rs}), \\
&D^{w,d}_{srl\tilde{j}}=\sum_{s'=1}^{n'+1}\eta_{s'}c_l^{\tilde{j}} P_l^{|\tilde{j}|}(\cos\varTheta_{s'})E^{w,d}_{srs'\tilde{j}},\quad w=1,2,3,4,\\
&C^{w,d}_{srlj}=\sum_{|\tilde{j}|\leq l} F_{sl\tilde{j}j}\mathrm{e}^{\mathrm{i}(j-\tilde{j})\varphi_r}D^{w,d}_{srl\tilde{j}},\quad
B^{w,d}_{sj'lj}=\sum_{r=0}^{2n+3}\mu_r\mathrm{e}^{-\mathrm{i}j'\varphi_r}C^{w,d}_{srlj},\quad w=1,2,3,4.
\end{align*}
Here we have $\mathcal{F}^\top(\hat{x}_{rs})=[F_1(\hat{x}_{rs}),F_2(\hat{x}_{rs}),F_3(\hat{x}_{rs})]$, and then 
$$
\frac{\partial}{\partial\theta_{s}}\Big(\mathcal{F}(\hat{x}_{rs})\boldsymbol{v}^{(d)}(\theta_{s},\varphi_{r})\Big)=\left[ \begin{array}{c}
	\partial_{\theta_{s}}F_1(\hat{x}_{rs})\cdot \boldsymbol{v}^{(d)}(\theta_{s},\varphi_{r})+F_1(\hat{x}_{rs})\cdot\partial_{\theta_{s}}\boldsymbol{v}^{(d)}(\theta_{s},\varphi_{r}) \\
	\partial_{\theta_{s}}F_2(\hat{x}_{rs})\cdot \boldsymbol{v}^{(d)}(\theta_{s},\varphi_{r})+F_2(\hat{x}_{rs})\cdot\partial_{\theta_{s}}\boldsymbol{v}^{(d)}(\theta_{s},\varphi_{r})\\
	\partial_{\theta_{s}}F_3(\hat{x}_{rs})\cdot \boldsymbol{v}^{(d)}(\theta_{s},\varphi_{r})+F_3(\hat{x}_{rs})\cdot\partial_{\theta_{s}}\boldsymbol{v}^{(d)}(\theta_{s},\varphi_{r})
\end{array} 
\right ].
$$
Similarly, one can obtain $\frac{\partial}{\partial\varphi_r}\Big(\mathcal{F}(\hat{x}_{rs})\boldsymbol{v}^{(d)}(\theta_{s},\varphi_{r})\Big)$.
Then $\mathbf{H}^{k'}_{l'j',lj}$ can be evaluated by
\begin{align*}
\mathbf{H}^{k'}_{l'j',lj}=\sum_{s=1}^{n+2}\sum_{d=1}^2\nu_s\Big(\overline{\mi j'\alpha^{(k',d)}_{l',j'}(\theta_s)}B^{1,d}_{sj'lj}+\overline{\alpha^{(k',d)}_{l',j'}(\theta_s)}B^{2,d}_{sj'lj}+\overline{\frac{\partial\alpha^{(k',d)}_{l',j'}(\theta_s)}{\partial\theta_s}}B^{3,d}_{sj'lj}+\overline{\alpha^{(k',d)}_{l',j'}(\theta_s)}B^{4,d}_{sj'lj}\Big).
\end{align*}
It is worth pointing out that the numerical implementation can be done very efficiently since each step of the operations only involves scalar functions. 

We briefly mention the evaluation of $\mathbf{U}^{k',\tilde{k}}_{l'j',lj}$ and $\mathbf{I}_{l'j',lj}$. Noting that $\mathcal{F}$ is a orthogonal transformation, we may obtain $\mathbf{U}^{k',\tilde{k}}_{l'j',lj}$ via
\begin{align*}
&C^{\tilde{k},d'}_{srlj}=\alpha_{l,j}^{(\tilde{k},d')}(\theta_s)\me^{\mi j \varphi_{r}}J_{\boldsymbol q}\circ p(\theta_s,\varphi_r),\quad
B^{\tilde{k},d'}_{sj'lj}=\sum_{r=0}^{2n+3}\mu_r\mathrm{e}^{-\mathrm{i}j'\varphi_r}C^{\tilde{k},d'}_{srlj},\quad\tilde{k},d'=1,2,\\
&\mathbf{U}^{k',\tilde{k}}_{l'j',lj}=\sum_{s=1}^{n+2}\sum_{d'=1}^{2}\nu_s\overline{\alpha^{(k',d')}_{l',j'}(\theta_{s})}B^{\tilde{k},d'}_{sj'lj},
\end{align*}
and  $\mathbf{I}_{l'j',lj}$ via 
\begin{align*}
&C_{srlj}=c_l^jP_l^{|j|}(\cos\theta_s)\me^{\mi j \varphi_{r}}J_{\boldsymbol q}\circ p(\theta_s,\varphi_r),\quad
B_{sj'lj}=\sum_{r=0}^{2n+3}\mu_r\mathrm{e}^{-\mathrm{i}j'\varphi_r}C_{srlj},\\
&\mathbf{I}_{l'j',lj}=\sum_{s=1}^{n+2}\nu_sc_{l'}^{j'}P_{l'}^{|j'|}(\cos\theta_s)B_{sj'lj}.
\end{align*}
The approximations $\mathbf{K}_{l'j',lj}$ and $\mathbf{M}^{k',\tilde{k}}_{l'j',lj}$ for $\mathcal{K}$ and $\mathcal{M}$ are discussed in \cite{GH2008, GS2002}. We give them in the appendix for completeness. 

\begin{remark}
The convergence analysis of the proposed numerical method depends on the invertibility of the boundary integral system \eqref{boundaryIE0} as well as the discretized system \eqref{boundaryIE2_Galerkin}, which is beyond the scope of this paper and currently under investigation. We refer to \cite{DLL2021} for the convergence analysis for the two-dimensional problems. It is expected that the following estimate holds under certain conditions: 
\begin{eqnarray}
	||(\widetilde{G}^{n}_{\mathfrak{p}}\circ \boldsymbol{q}^{-1} ,\widetilde{G}^{n}_{\mathfrak{s}}\circ \boldsymbol{q}^{-1})^\top-(g_1,\bg_2)^\top||_{\infty,\Gamma_{D}}\le \frac{C}{n^{q-1}} ||(g_1,\bg_2)^\top||_{q,\infty,\Gamma_{D}},\quad \forall q\in\mathbb{N},
\end{eqnarray}
where $(g_1,\bg_2)^\top$ is the exact solution to the integral equation system \eqref{boundaryoperator}, $||\cdot||_{\infty,\Gamma_{D}}$ is the maximum norm for functions in $C(\Gamma_{D})\otimes {\boldsymbol C}_T(\Gamma_{D})$, and $||\cdot||_{q,\infty,\Gamma_{D}}$ is the norm for continuously differentiable vector functions on $\Gamma_{D}$ up to order $q$. 
Numerical experiments show that the spectral convergence is achieved as long as $n'\ge 2n+1$. 
\end{remark}

\section{Numerical experiments}

In this section, we present some numerical experiments to demonstrate the superior performance of the proposed method. We consider three different geometries for the obstacle: ellipsoid, cushion, and bean, as shown in  Fig. \ref{obastacles}. Their parametrizations are given in Table \ref{boundary}. Throughout the numerical experiments, we take the Lam\'{e} parameters $\lambda=2, \mu=1$ and the truncation number $n'=2n+1$. The method is implemented using MATLAB on a server with two Intel Xeon cores and 256 GB RAM. No special effort is paid to solving the resulted linear system of equations other than the $backslash$  command in MATLAB.

\begin{figure}[h]
	\centering 
	\subfigure[Ellipsoid]
	{\includegraphics[width=0.32\textwidth]{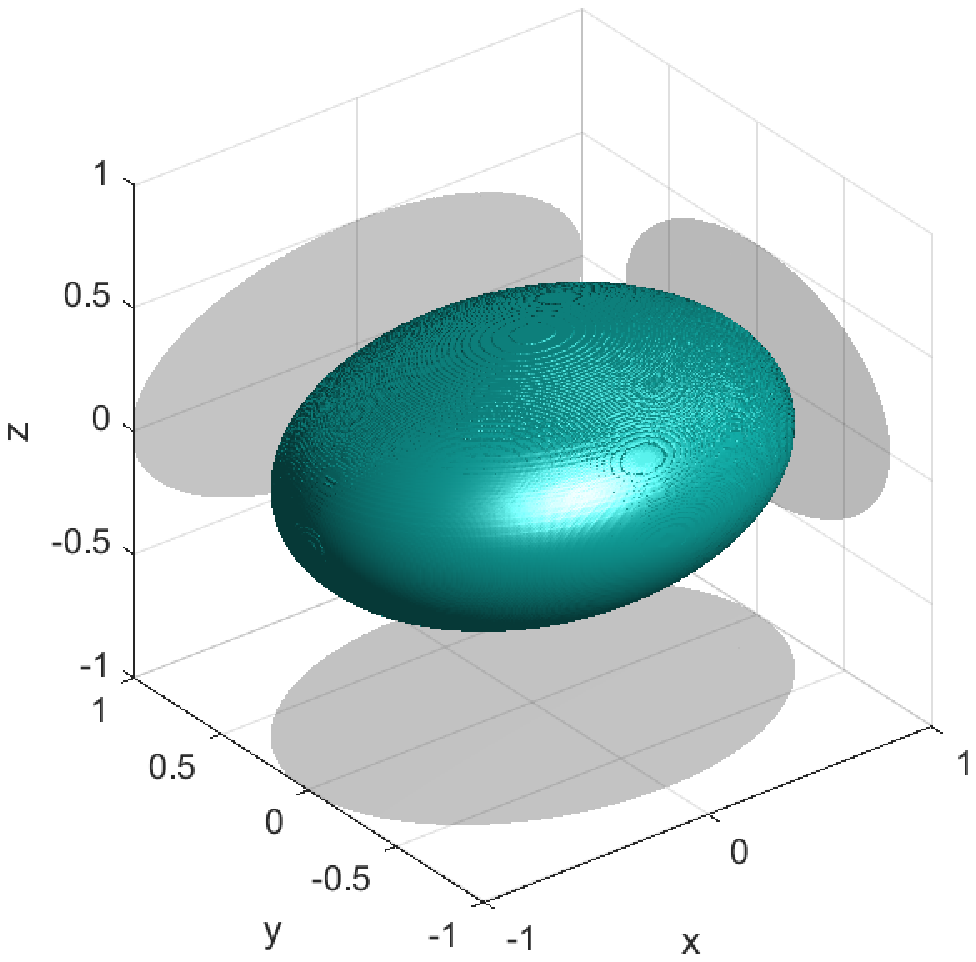}} 
	\subfigure[Cushion]
	{\includegraphics[width=0.32\textwidth]{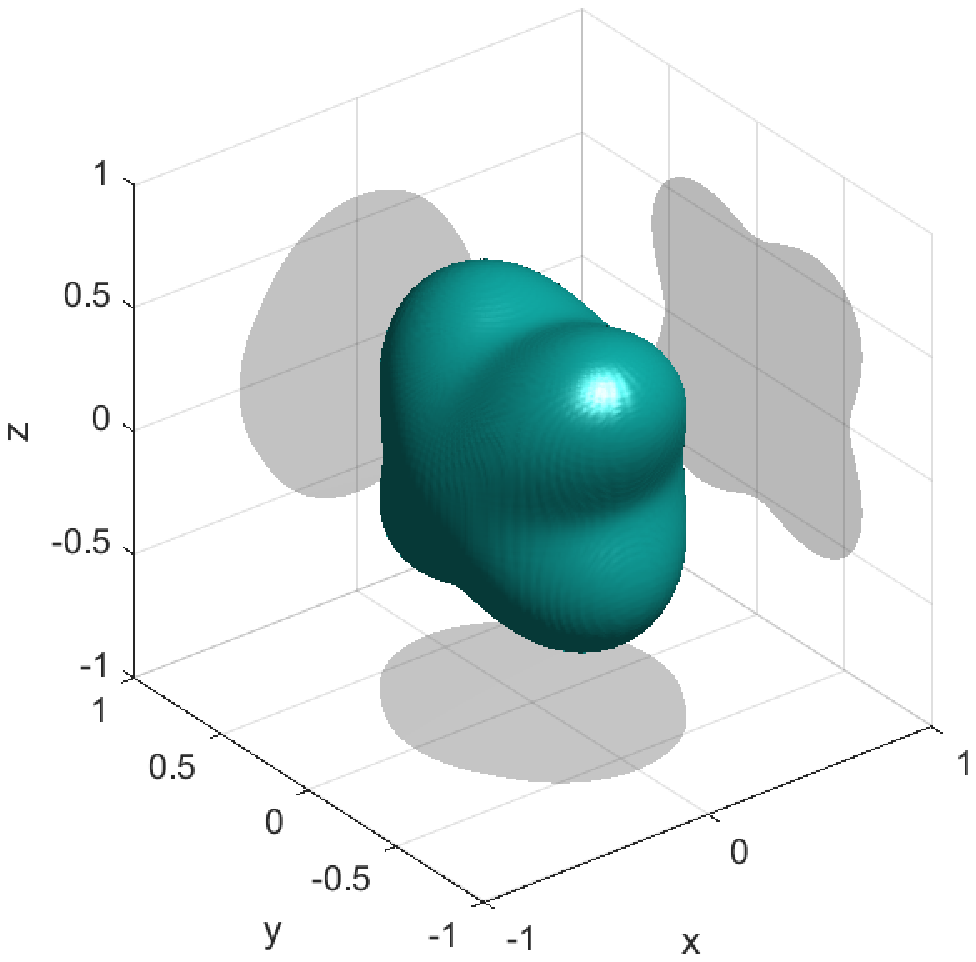}}
	\subfigure[Bean]
	{\includegraphics[width=0.32\textwidth]{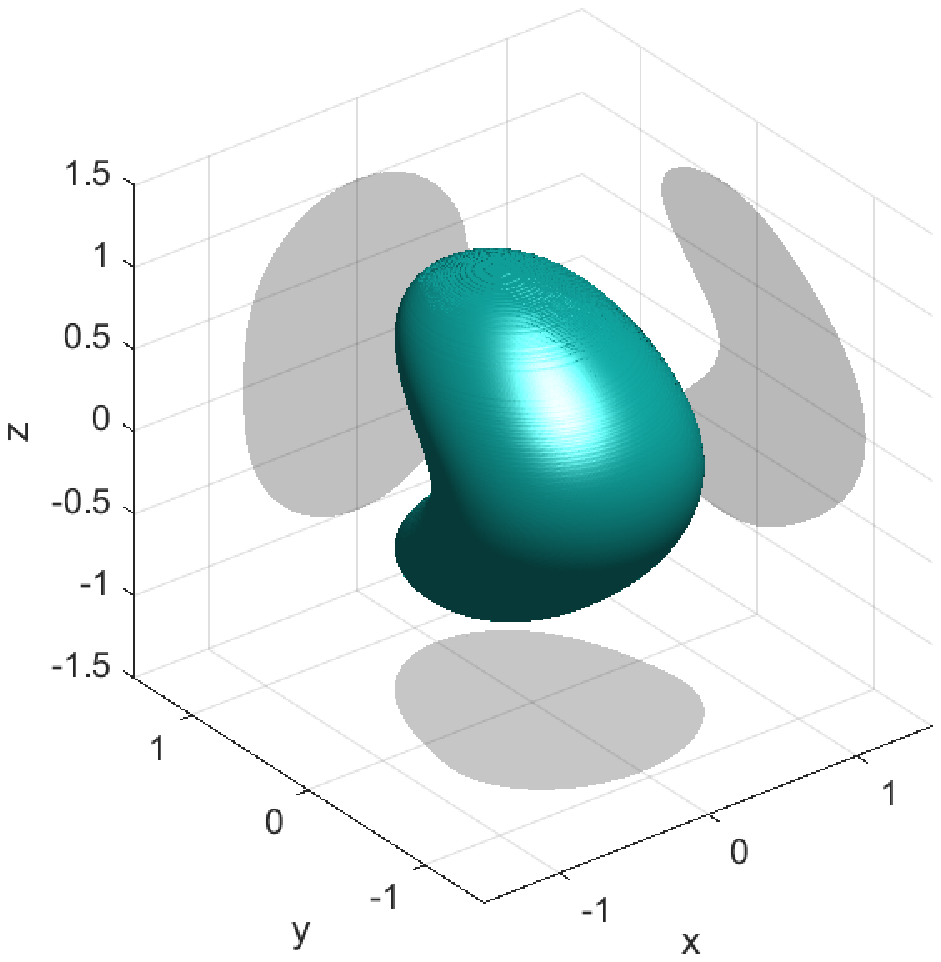}}
	\caption{Geometries of the obstacles.}\label{obastacles}
\end{figure}

\begin{table}[h] 
	\caption{Parametrizations of the obstacles.}
	\label{boundary}
	\begin{tabular}{lll}
		\toprule[1pt]
		Type           & Parametrization\\
		\midrule \vspace{1.5ex} 
		Ellipsoid  & 
		$\displaystyle
		x_1^2+\frac{x_2^2}{0.75^2}+\frac{x_3^2}{0.5^2}=1$
		\vspace{1.5ex}
		\\ 
		Cushion  & 
		$\displaystyle z(\theta,\varphi)=\sqrt{0.27+0.065(\cos2\varphi-1)(\cos4\theta-1)}\hat{x}(\theta,\varphi)$
		\vspace{1.5ex}\\
		Bean  & 
		$\displaystyle
		\frac{x_1^2}{0.64\big(1-0.1\cos(\pi x_3)\big)}+\frac{\big(0.3\cos(\pi x_3)+x_2\big)^2}{0.64\big(1-0.4\cos(\pi x_3)\big)}+x_3^2=1$ \\
		\bottomrule[1pt]
	\end{tabular}
\end{table}

To test the accuracy of the proposed method, we construct an exact solution in form of \begin{align}\label{exact solution1}
	\boldsymbol{v_*}(x)=\mathbb{G}(x,y_0)\boldsymbol{p},\quad  y_0=(0,0.05,0.0866)^\top, \quad \boldsymbol{p}=(1,0,0)^\top,
\end{align}
which is the same as the first test in \cite{Louer2014}, where the tensor 
$$
\mathbb{G}(x,y)=\frac{1}{\mu}\Big(\Phi(x,y;\kappa_{\mathfrak s})I+\frac{1}{\kappa_{\mathfrak s}^2}\nabla_x \nabla_x^\top\big(\Phi(x,y;\kappa_{\mathfrak s})-\Phi(x,y;\kappa_{\mathfrak p})\big)\Big)
$$ 
is the fundamental solution of the elastic wave equation and $\Phi(x, y, \kappa)$ is the fundamental solution for the three-dimensional Helmholtz equation given in \eqref{fs3d}. Then, the corresponding far-field is given by
$$
\boldsymbol{v_{*,ps}^\infty}(\hat{x})=\frac{1}{\mu}\frac{\mathrm{e}^{-\mathrm{i}\kappa_{\mathfrak s}\hat{x}\cdot y_0}}{4\pi}(\hat{x}\times\boldsymbol{p})\times\hat{x}+\frac{1}{\lambda+2\mu}\frac{\mathrm{e}^{-\mathrm{i}\kappa_{\mathfrak p}\hat{x}\cdot y_0}}{4\pi}(\hat{x}\cdot\boldsymbol{p})\hat{x}.
$$
Due to the uniqueness result given in Theorem \ref{unique1}, we can solve the boundary value problem \eqref{HelmholtzDec} by enforcing the following boundary conditions on $\Gamma_D$:
\begin{align*}
	\boldsymbol{u} = \boldsymbol{v}_*. 
\end{align*}
Then, the numerical far-field pattern  $\boldsymbol{v}_n^\infty=\boldsymbol{v}_{n,\mathfrak{p}}^\infty+\boldsymbol{v}_{n,\mathfrak{s}}^\infty$ can be calculated by using \eqref{behaviour relation} and \eqref{singlelayer_far}.
The maximum errors are calculated over 1300 observations (equally spaced for the observation angles $\theta$ and $\varphi$) in accordance with the expression
$$
\|\epsilon_{ps}\|_\infty:=\|\boldsymbol{v}_{n,ps}^\infty-\boldsymbol{v}_{*,ps}^\infty\|_\infty=\max_{\hat{x}\in\mathbb{S}^2}|\boldsymbol{v}_{n,ps}^\infty(\hat{x})-\boldsymbol{v}_{*,ps}^\infty(\hat{x})|.
$$

In addition to the point source case,  we also compute the far-field pattern, denoted by $\boldsymbol{v}^\infty_{pw}$, resulted from the elastic plane wave incidence
$$
\boldsymbol{u}^i(x)=\frac{1}{\mu}\mathrm{e}^{\mathrm{i}\kappa_{\mathfrak s}x\cdot \boldsymbol{d}}(\boldsymbol{d}\times\boldsymbol{p})\times\boldsymbol{d}+\frac{1}{\lambda+2\mu}\mathrm{e}^{\mathrm{i}\kappa_{\mathfrak p}x\cdot \boldsymbol{d}}(\boldsymbol{d}\cdot\boldsymbol{p})\boldsymbol{d}, \quad \boldsymbol{d},\boldsymbol{p}\in\mathbb{S}^2,
$$ 
where the incident direction vector $\boldsymbol{d}=(0,0,1)^\top$ and the polarization vector $\boldsymbol{p}=(1,0,0)^\top$. Again, we calculate the maximum errors over the observations on the unit sphere by using
$$
\|\epsilon_{pw}\|_\infty:=\|\boldsymbol{v}_{n,pw}^\infty-\boldsymbol{v}_{n_*,pw}^\infty\|_\infty=\max_{\hat{x}\in\mathbb{S}^2}|\boldsymbol{v}_{n,pw}^\infty(\hat{x})-\boldsymbol{v}_{n_*,pw}^\infty(\hat{x})|.
$$
where $n_*$ is a sufficiently large number.

\subsection{Example 1}

In this example, we evaluate the elastic scattering problem for three different obstacles at the frequency $\omega = \pi$.  
We choose $n_*=60$ when the analytical solution is not available. Numerical results for the ellipsoid-shaped obstacle are given in Table \ref{error_ellipsoid}. It is shown that the solver rapidly achieves 13 digits accuracy for the point source test with $n=35$ and stops increasing due to the round off errors. For the plane wave scattering, $10$ digits accuracy is obtained with $n=25$. Table \ref{error_ellipsoid} also shows the time to construct the scattering matrix, denoted by $T_{coe}$ in seconds, and the time to solve the linear system, denoted by $T_{sol}$ in seconds. It is clear to note that the time is dominated by the matrix construction and roughly scales on the order of $\mathcal{O}(n^5)$, which is consistent with our complexity analysis. Tables \ref{error_cushion} and \ref{error_bean} give the numerical results for the cushion- and bean-shaped obstacles, respectively. Both tables show a rapid convergence as $n$ increases, which confirms the spectral accuracy of the solver.

\begin{table}[hhh]
	\centering 
	\caption{Numerical results for the ellipsoid-shaped obstacle at $\omega = \pi$.} 
	\label{error_ellipsoid} 
	\begin{tabular}{c|c|c|c|c}  
		\toprule[1pt]
		& \multicolumn{4}{c}{\bf Ellipsoid: \quad$\omega=\pi$}  \\ 
		\cline{2-5}
		$n$&$||\epsilon_{ps}||_\infty$
		&$||\epsilon_{pw}||_\infty$ & $T_{coe}$  & $T_{sol}$\\
		\toprule[1pt] 
		5 &\quad2.0854e-04~\quad&\quad7.8646e-03~\quad&0.2s&0.0003s \\
		15&\quad2.1597e-08~\quad&\quad6.7751e-08~\quad&2.9s&0.008s \\
		25&\quad2.6595e-12~\quad&\quad3.6123e-11~\quad&16.8s&0.1s \\
		35&\quad2.9117e-14~\quad&\quad3.3012e-11~\quad&64.3s&0.5s \\
		45&\quad5.8231e-14~\quad&\quad3.0243e-11~\quad&239.8s&2.0s \\
		55&\quad4.4362e-14~\quad&\quad4.0796e-11~\quad&874.4s&6.1s \\
		\bottomrule[1pt] 
	\end{tabular}
\end{table}

\begin{table}[hhh]
	\centering 
	\caption{Numerical results for the cushion-shaped obstacle at $\omega = \pi$.} 
	\label{error_cushion} 
	\begin{tabular}{c|c|c|c|c}  
		\toprule[1pt]
		& \multicolumn{4}{c}{\bf Cushion: \quad$\omega=\pi$}  \\ 
		\cline{2-5}
		$n$&$||\epsilon_{ps}||_\infty$
		&$||\epsilon_{pw}||_\infty$ & $T_{coe}$  & $T_{sol}$\\
		\toprule[1pt] 
		5 &\quad3.2246e-04~\quad&\quad3.9169e-02~\quad&0.3s&0.0003s \\
		15&\quad8.9083e-07~\quad&\quad5.4441e-06~\quad&3.3s&0.009s \\
		25&\quad1.5665e-09~\quad&\quad4.1725e-08~\quad&17.5s&0.1s \\
		35&\quad1.0287e-11~\quad&\quad3.6207e-10~\quad&67.1s&0.5s \\
		45&\quad2.4172e-13~\quad&\quad3.6305e-11~\quad&254.1s&2.1s \\
		55&\quad6.2876e-14~\quad&\quad4.5173e-11~\quad&930.5s&7.0s \\
		\bottomrule[1pt] 
	\end{tabular}
\end{table}
\begin{table}[hhh]
	\centering 
	\caption{Numerical results for the bean-shaped obstacle at $\omega = \pi$.} 
	\label{error_bean} 
	\begin{tabular}{c|c|c|c|c}  
		\toprule[1pt]
		& \multicolumn{4}{c}{\bf Bean: \quad$\omega=\pi$}  \\ 
		\cline{2-5}
		$n$&$||\epsilon_{ps}||_\infty$
		&$||\epsilon_{pw}||_\infty$ & $T_{coe}$  & $T_{sol}$ \\
		\toprule[1pt] 
		5 &\quad4.7523e-03~\quad&\quad1.4119e-01~\quad&0.2s&0.0003s \\
		15&\quad4.8644e-05~\quad&\quad1.7490e-04~\quad&3.2s&0.008s \\
		25&\quad2.4575e-07~\quad&\quad1.7558e-06~\quad&18.0s&0.1s \\
		35&\quad3.3519e-09~\quad&\quad2.1805e-08~\quad&66.3s&0.5s \\
		45&\quad8.9240e-11~\quad&\quad9.1735e-11~\quad&237.5s&2.2s \\
		55&\quad2.9727e-11~\quad&\quad7.0980e-11~\quad&905.0s&5.9s \\		
		\bottomrule[1pt] 
	\end{tabular}
\end{table}

\subsection{Example 2}

We consider the elastic scattering of three obstacles at higher frequency $\omega = 8\pi$. The real and the imaginary parts of the quantity $\boldsymbol{v}^\infty_{n,pw}(\boldsymbol{d})\cdot\boldsymbol{p}$, together with the errors $\|\epsilon_{ps}\|_\infty$ for three obstacles are shown in Tables \ref{pw_ellipsoid}, \ref{pw_cushion}, and \ref{pw_bean}, respectively. Similarly, we observe a rapid convergence both for the point source test and plane wave scattering when $n$ increases. For a fixed $n$, the accuracy for the scattering of the ellipsoid is higher than that of the cushion and bean. This is due to the reason that the convergence rate depends on the analyticity of the boundary for the obstacle boundary \cite{R-book2}. It is expected that a less smooth boundary may lead to slower convergence rate.

\begin{table}[hhh]
	\centering 
	\caption{Scattering by an ellipsoid-shaped obstacle at $\omega = 8\pi$.} 
	\label{pw_ellipsoid} 
	\begin{tabular}{c|c|c|c}  
		\toprule[1pt]
		& \multicolumn{3}{c}{\bf Ellipsoid: \quad$\omega=8\pi$}  \\ 
		\cline{2-4}
		$n$& $||\epsilon_{ps}||_\infty$ &
		$\Re\{ \boldsymbol{v}^\infty_{n,pw}(\boldsymbol{d})\cdot\boldsymbol{p}\}$
		&
		$\Im\{ \boldsymbol{v}^\infty_{n,pw}(\boldsymbol{d})\cdot\boldsymbol{p}\}$ \\
		\toprule[1pt]
		25&3.6217e-05 &-1.564489047510042e+00 &1.051655398026258e+01 \\
		30&1.1212e-07 &-1.564570656025764e+00 &1.051657743451597e+01 \\
		35&2.7707e-10 &-1.564570705114201e+00 &1.051657744366693e+01 \\
		40&1.9588e-12 &-1.564570705195090e+00 &1.051657744366860e+01 \\
		45&5.0535e-13 &-1.564570705193652e+00 &1.051657744366452e+01 \\
		\bottomrule[1pt] 
	\end{tabular}
\end{table}

\begin{table}[hhh]
	\centering 
	\caption{Scattering by a cushion-shaped obstacle at $\omega = 8\pi$.} 
	\label{pw_cushion} 
	\begin{tabular}{c|c|c|c}  
		\toprule[1pt]
		& \multicolumn{3}{c}{\bf Cushion: \quad$\omega=8\pi$}  \\ 
		\cline{2-4}
		$n$&$||\epsilon_{ps}||_\infty$ &
		$\Re\{ \boldsymbol{v}^\infty_{n,pw}(\boldsymbol{d})\cdot\boldsymbol{p}\}$
		&
		$\Im\{ \boldsymbol{v}^\infty_{n,pw}(\boldsymbol{d})\cdot\boldsymbol{p}\}$ \\
		\toprule[1pt]
		25&2.5022e-04 &-1.569712590870811e+00 &5.039800881189093e+00\\
		30&1.0344e-05 &-1.574459019608859e+00 &5.043376519089912e+00\\
		35&2.6371e-07 &-1.574531401615982e+00 &5.043437889542368e+00\\
		40&1.4952e-08 &-1.574531761370081e+00 &5.043437868795261e+00\\
		45&1.5917e-09 &-1.574531768527667e+00 &5.043437902490211e+00\\
		50&1.7031e-10 &-1.574531769006316e+00 &5.043437900206628e+00\\
		\bottomrule[1pt] 
	\end{tabular}
\end{table}

\begin{table}[hhh]
	\centering 
	\caption{Scattering by a bean-shaped obstacle at $\omega = 8\pi$.} 
	\label{pw_bean} 
	\begin{tabular}{c|c|c|c}  
		\toprule[1pt]
		& \multicolumn{3}{c}{\bf Bean: \quad$\omega=8\pi$}  \\ 
		\cline{2-4}
		$n$& $||\epsilon_{ps}||_\infty$ &
		$\Re\{ \boldsymbol{v}^\infty_{n,pw}(\boldsymbol{d})\cdot\boldsymbol{p}\}$
		&
		$\Im\{ \boldsymbol{v}^\infty_{n,pw}(\boldsymbol{d})\cdot\boldsymbol{p}\}$ \\
		\toprule[1pt]
		35&1.7121e-02 &-2.387421716629113e+00 &1.012728600300475e+01 \\
		40&1.5762e-03 &-2.385302268097332e+00 &1.011101559995000e+01 \\
		45&1.2892e-04 &-2.384320155063459e+00 &1.010913376752758e+01 \\
		50&3.4649e-06 &-2.384311423020166e+00 &1.010898949549377e+01 \\
		55&1.9260e-07 &-2.384312575610280e+00 &1.010899080953868e+01 \\
		\bottomrule[1pt] 
	\end{tabular}
\end{table}

\subsection{Example 3}

In this example, we consider the high frequency scattering problem, which is challenging due to the high oscillation of the solution. In particular, we apply the spectral method to test the point source scattering by the ellipsoid and cushion at $\omega=16\pi$ and $\omega=24\pi$, respectively. Numerical errors for the two obstacles at different discretization number $n$ are shown in Tables \ref{hifre_ellipsoid} and \ref{hifre_cushion}. It can be seen that the high order convergence can still be achieved at high frequencies, which demonstrates that the solver is robust for the scattering problem in both low and high frequencies.

\begin{table}[hhh]
	\centering 
	\caption{Elastic scattering for the ellipsoid-shaped obstacle at high frequencies.} 
	\label{hifre_ellipsoid} 
	\begin{tabular}{c|c|c|c}  
		\toprule[1pt]
		& \multicolumn{3}{c}{\bf Ellipsoid: \quad $||\epsilon_{ps}||_\infty$}  \\ 
		\cline{2-4}
		$\omega$ & $n=75$ & $n=80$ & $n=85$ \\
		\toprule[1pt]
		$16\pi$&\quad1.2813e-11~\quad &\quad 2.8726e-12~\quad &\quad4.4200e-11\\
		$24\pi$&\quad 3.0964e-05~\quad &\quad1.3045e-07~\quad &\quad 2.2170e-10 \\
		\bottomrule[1pt] 
	\end{tabular}
\end{table}

\begin{table}[hhh]
	\centering 
	\caption{Elastic scattering for the cushion-shaped obstacle at high frequencies.} 
	\label{hifre_cushion} 
	\begin{tabular}{c|c|c|c}  
		\toprule[1pt]
		& \multicolumn{3}{c}{\bf Cushion: \quad $||\epsilon_{ps}||_\infty$}  \\ 
		\cline{2-4}
		$\omega$ & $n=75$ & $n=80$ & $n=85$ \\
		\toprule[1pt]
		$16\pi$&\quad 9.5257e-10~\quad &\quad 4.5437e-10~\quad &\quad 4.0872e-09\\
		$24\pi$&\quad 1.5524e-05~\quad &\quad 8.9533e-07~\quad &\quad 3.0466e-08 \\
		\bottomrule[1pt] 
	\end{tabular}
\end{table}

\section{Conclusion}

In this paper, we have proposed a novel boundary integral formulation and developed a high order spectral method for solving the  elastic obstacle scattering problem in three dimensions. Based on the Helmholtz decomposition, the elastic scattering problem is reduced to a coupled boundary value problem.  The uniqueness is examined for both the coupled boundary value problem and the system of boundary integral equations. By making use of the surface differential operators and Stokes' formula, we reduce the strongly singular operators to a weakly singular operator in form of the exterior integral of the Galerkin method. In addition, all operations in the full discretization are scalar, which makes the numerical implementation much easier. Numerical experiments, including three different obstacles and high frequency scattering, are shown to demonstrate the superior performance of the proposed method. Future work includes the convergence analysis of the proposed method, the extension to other boundary conditions, and an application of the method to solve the inverse elastic scattering problems. 

\section{Appendix}

For completeness, we give the approximations $\mathbf{K}_{l'j',lj}$ and $\mathbf{M}^{k',\tilde{k}}_{l'j',lj}$ for $\mathcal{K}$ and $\mathcal{M}$. The details can be found in \cite{GH2008, GS2002}.  

The approximation $\mathcal{K}_{n'}$ to $\mathcal{K}$ can be simplified as
\begin{align*}
	(\mathcal{K}_{n'}G)(\hat{x}):=&\int_{\mathbb{S}^2}\Big(\frac{1}{|\hat{n}-\hat{z}|}\mathcal{L}_{n'}\big\{T_{\hat{x}}\widetilde{K}_{1}(\hat{n},\hat{z})T_{\hat{x}}G(\hat{z})\big\}+\mathcal{L}_{n'}\big\{T_{\hat{x}}\widetilde{K}_{2}(\hat{n},\hat{z})T_{\hat{x}}G(\hat{z})\big\}\Big)\,\mathrm{d}s(\hat{z})\nonumber\\
	=&\sum_{l=0}^{n'}\sum_{|j|\leq l}\frac{4\pi}{2l+1}\big(T_{\hat{x}}\widetilde{K}_{1}(\hat{n},\cdot)T_{\hat{x}}G(\cdot),Y_{l,j}(\cdot)\big)_{n'}Y_{l,j}(\hat{n})\nonumber\\
	&+\sum_{l=0}^{n'}\sum_{|j|\leq l}\big(T_{\hat{x}}\widetilde{K}_{2}(\hat{n},\cdot)T_{\hat{x}}G(\cdot),Y_{l,j}(\cdot)\big)_{n'}Y_{l,j}(\hat{n})\nonumber\\
	=&\sum_{r'=0}^{2n'+1}\sum_{s'=1}^{n'}\xi_{r'}\eta_{s'}\Big[\alpha_{s'}^{n'}T_{\hat{x}}\widetilde{K}_{1,d}(\hat{n},\hat{y}_{r's'})+T_{\hat{x}}\widetilde{K}_{2,d}(\hat{n},\hat{y}_{r's'})\Big]T_{\hat{x}}G(\hat{y}_{r's'}).
\end{align*}
In particular, we obtain  
\begin{align*}
	\mathbf{K}_{l'j',lj}=&(\mathcal{K}_{n'}Y_{l,j},Y_{l',j'})_{n+1}\\
	=&\sum_{r=0}^{2n+3}\sum_{s=1}^{n+2}\mu_r\nu_s\sum_{r'=0}^{2n'+1}\sum_{s'=1}^{n'+1}\xi_{r'}\eta_{s'}\Big[\alpha_{s'}^{n'}\widetilde{K}_{1}(\hat{x}_{rs},\hat{y}_{rs}^{r's'})+\widetilde{K}_{2}(\hat{x}_{rs},\hat{y}_{rs}^{r's'})\Big]\\
	&\times\sum_{|\tilde{j}|\leq l}F_{sl\tilde{j}j}\mathrm{e}^{\mathrm{i}(j-\tilde{j})\varphi_r}Y_{l,\tilde{j}}\big(p(\varTheta_{s'},\varPhi_{r'})\big)\overline{Y_{l',j'}\big(p(\theta_s,\varphi_r)\big)}
\end{align*}
via the operations
\begin{align*}
	&E^{1}_{srs'\tilde{j}}=\sum_{r'=0}^{2n'+1}\xi_{r'}\widetilde{K}_{1}(\hat{x}_{rs},\hat{y}_{rs}^{r's'})\mathrm{e}^{\mathrm{i}\tilde{j}\varPhi_{r'}},\quad E^{2}_{srs'\tilde{j}}=\sum_{r'=0}^{2n'+1}\xi_{r'}\widetilde{K}_{2}(\hat{x}_{rs},\hat{y}_{rs}^{r's'})\mathrm{e}^{\mathrm{i}\tilde{j}\varPhi_{r'}},\\
	&D_{srl\tilde{j}}=\sum_{s'=1}^{n'+1}\eta_{s'}\Big[\alpha_{s'}^{n'}E^{1}_{srs'\tilde{j}}+E^{2}_{srs'\tilde{j}}\Big]c_l^{\tilde{j}}P_l^{|\tilde{j}|}(\cos\varTheta_{s'}),\\
	&C_{srlj}=\sum_{|\tilde{j}|\leq l}D_{srl\tilde{j}}F_{sl\tilde{j}j}\mathrm{e}^{\mathrm{i}(j-\tilde{j})\varphi_r}, \quad B_{sj'lj}=\sum_{r=0}^{2n+3}C_{srlj}\mu_r\mathrm{e}^{-\mathrm{i}j'\varphi_r},\\
	&\mathbf{K}_{l'j',lj}=\sum_{s=1}^{n+2}B_{sj'lj}\nu_sc_{l'}^{j'}P_{l'}^{|j'|}(\cos\theta_{s}).
\end{align*}

Analogously,  the entry of $\mathbf{M}_{l'j',lj}^{k',\tilde{k}}$ can be obtained via the following operations:
\begin{align*}
	&E^{d,d'}_{srs'\tilde{j}}=\sum_{r'=0}^{2n'+1}\xi_{r'}\mathrm{e}^{\mathrm{i}\tilde{j}\varPhi_{r'}}\boldsymbol{v}^{(d')}(\theta_{s},\varphi_{r})^\top\mathcal{F}^\top(\hat{x}_{rs}) M_{n'}(\hat{x}_{rs},\hat{y}_{rs}^{r's'})\mathcal{F}(\hat{x}_{rs})T_{\hat{x}_{rs}}^{-1}\boldsymbol{v}^{(d)}(\varTheta_{s'},\varPhi_{r'}),\\
	&D^{\tilde{k},d'}_{srl\tilde{j}}=\sum_{s'=1}^{n'+1}\sum_{d=1}^{2}\eta_{s'}\alpha^{(\tilde{k},d)}_{l,\tilde{j}}(\varTheta_{s'})E^{d,d'}_{srs'\tilde{j}},\\
	&C^{\tilde{k},d'}_{srlj}=\sum_{|\tilde{j}|\leq l}F_{sl\tilde{j}j}\mathrm{e}^{\mathrm{i}(j-\tilde{j})\varphi_r}D^{\tilde{k},d'}_{srl\tilde{j}}, \quad B^{\tilde{k},d'}_{sj'lj}=\sum_{r=0}^{2n+3}\mu_r\mathrm{e}^{-\mathrm{i}j'\varphi_r}C^{\tilde{k},d'}_{srlj},\\
	&\mathbf{M}^{k',\tilde{k}}_{l'j',lj}=\sum_{s=1}^{n+2}\sum_{d'=1}^{2}\nu_s\overline{\alpha^{(k',d')}_{l',j'}(\theta_{s})}B^{\tilde{k},d'}_{sj'lj},
\end{align*}
where $M_{n'}(\hat{x}_{rs},\hat{y}_{rs}^{r's'})=\alpha_{s'}^{n'}\widetilde{M}_1(\hat{x}_{rs},\hat{y}_{rs}^{r's'})+\widetilde{M}_2(\hat{x}_{rs},\hat{y}_{rs}^{r's'})$, and $M_1(\hat{x},\hat{y})$, $M_2(\hat{x},\hat{y})$ are $3\times3$ matrices defined in \eqref{operatorsB}. In contrast to the operations in \cite{GH2008}, which put ${\boldsymbol{v}^{(d')}}^\top$ in $\boldsymbol{B}_{sj't'lj\tilde{k}}$, we combine ${\boldsymbol{v}^{(d')}}^\top$ and $\mathcal{F}^\top$ together so that $E^{d,d'}_{srs'\tilde{j}}$ is a scalar function, which makes the numerical implementation much easier since each operation is scalar.

\end{document}